\documentclass[11pt]{article}

\usepackage[utf8]{inputenc}
\usepackage[T1]{fontenc}
\usepackage{lmodern}
\usepackage{amsmath,amssymb,amsthm,mathtools}
\usepackage{graphicx}
\usepackage{booktabs}
\usepackage{enumitem}
\usepackage[margin=1in]{geometry}
\usepackage{hyperref}
\usepackage{xcolor}
\usepackage{cite}
\usepackage{caption}
\usepackage{authblk}

\newtheorem{theorem}{Theorem}[section]
\newtheorem{proposition}[theorem]{Proposition}

\newtheorem{corollary}[theorem]{Corollary}
\theoremstyle{definition}
\newtheorem{definition}[theorem]{Definition}
\newtheorem{assumption}[theorem]{Assumption}
\newtheorem{remark}[theorem]{Remark}

\newcommand{\R}{\mathbb{R}}
\newcommand{\eps}{\epsilon}
\DeclareMathOperator*{\argmin}{arg\,min}

\DeclareMathOperator{\dist}{dist}
\DeclareMathOperator{\diam}{diam}
\DeclareMathOperator{\proj}{proj}

\title{A Diagnostic Framework for Implementation Risk in Bilevel
Decision Problems: The Ambiguity Premium and the
Robustness--Efficiency Frontier}

\author{Jiguang Yu \\ \texttt{jyu678@bu.edu}}
\affil[1]{College of Engineering, Boston University, Boston, MA 02215, USA}

\begin{document}
\maketitle
  
\begin{abstract}
\noindent
Hierarchical decision problems are often modeled as bilevel programs
in which a leader commits to a policy and a follower responds
optimally. When the follower's optimal response is nonunique, or
when only near-optimal follower behavior can be verified, the same
leader decision may induce a range of upper-level outcomes. This
paper develops a diagnostic framework for quantifying that exposure.
For a leader decision $x$, we evaluate the optimistic and pessimistic
upper-level values over the $\eps$-optimal follower response set
$S_\eps(x)$ and use their difference,
\[
  \Delta_\eps(x):=\psi_\eps^p(x)-\psi_\eps^o(x),
\]
as an ambiguity premium. The premium itself is classical in the
optimistic--pessimistic bilevel distinction; the contribution here is
to make it operational as an implementation-risk diagnostic. We
establish a diameter bound
$\Delta_\eps(x)\le L_F(x)\,\diam(S_\eps(x))$ and an $O(\sqrt{\eps})$
estimate under quadratic lower-level growth. We then organize
existing bilevel--GNEP reformulations by their computational roles
and propose a screening workflow that reports, for each candidate
policy, nominal value, ambiguity exposure, and a first-order
residual. Two stylized case studies---a parallel-link Stackelberg
pricing problem and a convex generation-planning model with
diversification constraints---show how the resulting
robustness--efficiency frontier can identify policies that are
nominally attractive but sensitive to near-optimal follower
responses.
\end{abstract}

\noindent\textbf{Keywords:} bilevel optimization; Stackelberg
games; generalized Nash equilibrium; optimistic and pessimistic
bilevel; implementation risk; parametric stability;
robustness--efficiency frontier.

\section{Introduction}\label{sec:intro}
Bilevel optimization is a natural modeling framework for hierarchical decision problems in which one agent commits to a policy and a second agent reacts optimally~\cite{colson2007overview,liu2025bidirectional,wang2026algebraic,wang2025analysis,
dempe2002foundations,zemkohoo2020bilevel,kleinert2021survey,luo1996mathematical}. 
Such structures arise in Stackelberg games and network pricing~\cite{labbe1998bilevel,brotcorne2001bilevel,dempe2013bilevel,
gilbert2015numerical}, regulation~\cite{bard2013practical,wan2014estimation}, infrastructure planning
and energy systems~\cite{gabriel2012complementarity,wang2025analysis1,liang2025global,wogrin2020applications},
interdiction and security~\cite{smith2020survey,wang2026introduction1,caprara2016bilevel,israeli2002shortest},
hyperparameter and machine-learning pipelines~\cite{franceschi2018bilevel,wang2026damage,liu2021investigating}, and many
other areas. 
Their appeal is also their main difficulty: the leader's feasible region is defined implicitly through the follower's solution map, and the upper-level outcome may depend critically on how lower-level optimal responses are selected. This is especially acute when the follower admits multiple optima,
giving rise to the classical distinction between optimistic and pessimistic bilevel models~\cite{loridan1996weak,dempe2002foundations,yu2026pattern,wiesemann2013pessimistic,wang2025multi,beck2023survey,gao2022rolling,besanccon2019near,wang2026lecture,wang2026first,wang2026introduction}.

\paragraph{Motivation.}
From a decision-maker's perspective, follower nonuniqueness is not merely a technical nuisance. Two follower responses that are equally acceptable from the follower's point of view can induce materially
different upper-level outcomes. 
Moreover, in many real systems the leader can only verify that a follower's response is approximately optimal: numerical tolerances, bounded rationality, contractual flexibility, or simply the existence of
multiple equally good operating points all create a neighborhood of acceptable follower responses rather than a single point. Denoting this $\eps$-optimal response set by $S_\eps(x)$, the relevant
quantity for a planner is not only the nominal leader value, but
also the width of the leader's \emph{value interval}
$[\psi_\eps^o(x),\psi_\eps^p(x)]$ generated by $S_\eps(x)$.

\paragraph{Contribution.}
The optimistic--pessimistic value gap is a classical object in
bilevel optimization. Our contribution is not to introduce this
gap, but to use it as the organizing quantity in a decision
diagnostic for implementation risk. Specifically, we make four
contributions.

First, we define a candidate-level diagnostic triple consisting of
nominal optimistic value, ambiguity exposure, and a first-order
residual, and we summarize the first two components through a
robustness--efficiency frontier. Second, we connect the ambiguity
premium to response-set geometry by proving a Lipschitz diameter
bound and an $O(\sqrt{\eps})$ estimate under quadratic lower-level
growth. Third, we organize existing bilevel--GNEP
reformulations~\cite{facchinei2010generalized,lampariello2017bridge,
lampariello2020numerically,lampariello2019standard} according to their
computational roles: global optimistic reformulation,
stationarity-oriented computation, and pessimistic
lower-equilibrium evaluation. Fourth, we demonstrate the workflow
on two stylized examples chosen to isolate different sources of
ambiguity: exact follower multiplicity in a Stackelberg pricing
problem and near-optimal redispatch in a strictly convex planning
problem.

The analytical role of the paper is to collect simple stability
consequences of the diagnostic and make them operational for
screening candidate leader decisions. The computational workflow
uses Nikaido--Isoda-type penalization~\cite{nikaido1955note,fukushima1992equivalent}
for pessimistic evaluation and a proximal alternating scheme for
optimistic candidate generation, with a Fischer--Burmeister
residual~\cite{fischer1992special,facchinei2003finite} replacing ad hoc
residuals.

\paragraph{Scope.}
The framework is diagnostic rather than predictive. We do not
claim new bilevel--GNEP equivalence theorems, nor do we claim
global optimality for all numerically reported pessimistic
portfolios. The reformulations used below are drawn from the
existing GNEP and bilevel literature. The case studies are
stylized computational experiments designed to show how the
diagnostic behaves, not empirical forecasts or policy
recommendations.

\paragraph{Positioning relative to recent literature.}
Recent work has greatly expanded the scope of bilevel analysis
under uncertainty and
equilibrium~\cite{beck2023survey,besanccon2019near,
buchheim2022stochastic,burtscheidt2020risk}, has produced improved
global solution methods for optimistic and pessimistic
problems~\cite{kleniati2015generalization,djelassi2021recent,liu2018pessimistic,
mitsos2008global,tsoukalas2009global}, and has clarified the role of
generalized Nash equilibria in hierarchical
settings~\cite{facchinei2007generalized,dreves2011solution,
kulkarni2012variational}. Our framework is complementary to this
computational work: it takes the existence of such solvers as a
given and adds a decision-analytic overlay that evaluates a
candidate decision along three dimensions at once.

\paragraph{Organization.} Section~\ref{sec:problem} fixes notation,
consolidates definitions of the lower-level value function and
solution map, and defines the decision diagnostics.
Section~\ref{sec:theory} presents the analytical results linking
the ambiguity premium to diameter and quadratic-growth moduli.
Section~\ref{sec:reform} reviews the three GNEP reformulations
used in the workflow and states their computational roles.
Section~\ref{sec:algo} describes the computational workflow.
Sections~\ref{sec:case1}--\ref{sec:case2} present the two case
studies. Section~\ref{sec:managerial} discusses managerial
implications with softened claims.
Section~\ref{sec:conclusion} concludes. Appendix~\ref{sec:proofs}
contains proofs; Appendix~\ref{sec:repro} documents
reproducibility.

\section{Problem Class, Value Functions, and Diagnostics}
\label{sec:problem}
We consider a parametric bilevel setting with leader decision
$x\in X\subset\R^n$ and follower decision $y\in Y\subset\R^m$. We
make the following standing assumption throughout the paper, which
is mild and standard~\cite{bonnans2013perturbation,zemkohoo2020bilevel}.

\begin{assumption}[Standing assumptions]\label{ass:standing}
$X\subset\R^n$ and $Y\subset\R^m$ are nonempty compact sets.
$F:X\times Y\to\R$ and $f:X\times Y\to\R$ are continuous.
\end{assumption}

\noindent
Under Assumption~\ref{ass:standing}, the lower-level value function
and exact solution map
\begin{equation}\label{eq:phi-S}
  \phi(x):=\min_{v\in Y} f(x,v),
  \qquad
  S(x):=\argmin_{v\in Y} f(x,v)=\{y\in Y:f(x,y)=\phi(x)\}
\end{equation}
are well defined, $\phi$ is continuous on $X$, and $S$ is
nonempty-valued and upper semicontinuous~\cite{bonnans2013perturbation}. The
$\eps$-optimal response set for $\eps\ge 0$ is
\begin{equation}\label{eq:Seps}
  S_\eps(x):=\{y\in Y : f(x,y)\le \phi(x)+\eps\}.
\end{equation}
By construction
$S(x)=S_0(x)\subseteq S_{\eps_1}(x)\subseteq S_{\eps_2}(x)\subseteq Y$
for $0\le\eps_1\le\eps_2$.

The optimistic and pessimistic leader value functions at tolerance
$\eps\ge 0$ are
\begin{align}
  \psi_\eps^o(x)
  &:=\min_{y\in S_\eps(x)} F(x,y),
  \label{eq:psi-o}\\
  \psi_\eps^p(x)
  &:=\max_{y\in S_\eps(x)} F(x,y).
  \label{eq:psi-p}
\end{align}
Both the minimum and the maximum are attained under
Assumption~\ref{ass:standing}, because $S_\eps(x)$ is a nonempty
compact subset of $Y$ and $F(x,\cdot)$ is continuous. The
ambiguity premium is
\begin{equation}\label{eq:premium-def}
  \Delta_\eps(x):=\psi_\eps^p(x)-\psi_\eps^o(x)\ge 0.
\end{equation}

\subsection{Decision diagnostics}
\label{sec:diagnostics}
We assemble the quantities used to compare candidate leader
decisions into the following decision-diagnostic set. 

\begin{definition}[Decision diagnostics]\label{def:diagnostics}
For $x\in X$, $y\in Y$, and $\eps\ge 0$:
\begin{enumerate}[leftmargin=*,nosep]
\item \emph{Ambiguity premium:}
$\Delta_\eps(x)=\psi_\eps^p(x)-\psi_\eps^o(x)$.
\item \emph{Normalized ambiguity ratio:}
$\rho_\eps(x):=\Delta_\eps(x)/(1+|\psi_\eps^o(x)|)$.
\item \emph{Tolerance violation residual:}
\begin{equation}\label{eq:rLL}
  r_\mathrm{LL}^\eps(x,y):=\max\{0,\ f(x,y)-\phi(x)-\eps\}.
\end{equation}
\item \emph{Stationarity residual.} The definition of a usable
first-order residual requires some care because $\phi$ is
continuous but generally nondifferentiable. We therefore define
the stationarity residual through the GNEP reformulation
\eqref{eq:G1} of Section~\ref{sec:reform}, which avoids
$\nabla_x\phi$. Concretely, at a triple $(x,y,v)\in X\times Y\times Y$
with multiplier $\lambda\ge 0$ on the coupling constraint
$h(x,y,v):=f(x,y)-f(x,v)-\eps\le 0$, we use only first derivatives
of $f$ and $F$, which are well defined whenever $F$ and $f$ are
continuously differentiable on a neighborhood of the candidate
point. 
\end{enumerate}
\end{definition}

\begin{remark}[Assumptions where they are used]\label{rem:assum-map}
The standing assumptions (compactness and continuity) are used for
Definition~\ref{def:diagnostics}(a)--(c). Definition
\ref{def:diagnostics}(d) is well defined only under the additional
smoothness assumption that $F$ and $f$ are continuously
differentiable in $(x,y)$ on a neighborhood of the candidate
point; we make this assumption explicit when the stationarity
residual is used, and not globally.
\end{remark}

We also use the Pareto set in the plane
$(\psi_\eps^o,\Delta_\eps)$:
\begin{equation}\label{eq:pareto}
  \mathcal P_\eps
  :=\bigl\{(\psi_\eps^o(x),\Delta_\eps(x)):x\in X,\ x\text{ is not dominated}\bigr\},
\end{equation}
where $(u,\delta)$ dominates $(u',\delta')$ iff $u\le u'$,
$\delta\le\delta'$, and at least one inequality is strict. We call
$\mathcal P_\eps$ the robustness--efficiency frontier. Its
practical utility is that it compresses the joint evaluation of
nominal value and implementation exposure into a two-dimensional
picture without throwing away either dimension.

\section{Structural Properties of the Ambiguity Premium}
\label{sec:theory}
This section contains the main analytical contribution of the
paper. The results are not deep, but they link the ambiguity
premium to standard stability quantities and therefore give the
diagnostic analytical content beyond its definitional form.

Throughout, let $\diam(A):=\sup_{y,y'\in A}\|y-y'\|$ denote the
diameter of a nonempty bounded set $A\subseteq\R^m$.

\subsection{Attainment and interval width}
\label{sec:theory-attain}
\begin{proposition}[Attainment and interval width]\label{prop:attain}
Under Assumption~\ref{ass:standing}, for every $x\in X$ and
$\eps\ge 0$ the set $S_\eps(x)$ is nonempty and compact; both
$\psi_\eps^o(x)$ and $\psi_\eps^p(x)$ are attained; and
\begin{equation}\label{eq:interval}
  \Delta_\eps(x)
  =\diam_\R\bigl(F(x,S_\eps(x))\bigr)
  =\max_{y\in S_\eps(x)}F(x,y)-\min_{y\in S_\eps(x)}F(x,y),
\end{equation}
where $\diam_\R(A):=\sup A-\inf A$ is the one-dimensional diameter
of a bounded nonempty $A\subseteq\R$.
\end{proposition}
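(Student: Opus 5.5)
The proof is a direct verification from Assumption~\ref{ass:standing}, carried out in three steps: nonemptiness, compactness, and then attainment together with the interval identity.

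\emph{Nonemptiness.} Fix $x\in X$ and $\eps\ge 0$. The map $f(x,\cdot)$ is continuous on the compact set $Y$, so by Weierstrass the minimum $\phi(x)$ is attained at some $\bar y\in Y$. Then $f(x,\bar y)=\phi(x)\le\phi(x)+\eps$, so $\bar y\in S_\eps(x)$. Equivalently, $S(x)\subseteq S_\eps(x)$ and $S(x)\neq\emptyset$.

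\emph{Compactness.} We have
\[
S_\eps(x)=Y\cap\{y:f(x,y)\le\phi(x)+\eps\}.
\]
Since $\phi(x)$ is a fixed real number and $f(x,\cdot)$ is continuous, the second set is the preimage of the closed set $(-\infty,\phi(x)+\eps]$. It is therefore closed relative to $Y$. A closed subset of the compact set $Y\subset\R^m$ is compact.

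\emph{Attainment and the interval identity.} Applying Weierstrass to the continuous map $F(x,\cdot)$ on the nonempty compact set $S_\eps(x)$ shows that the minimum and maximum in \eqref{eq:psi-o}--\eqref{eq:psi-p} are attained. For \eqref{eq:interval}, set $A:=F(x,S_\eps(x))\subseteq\R$. This is the continuous image of a nonempty compact set, hence nonempty, compact and bounded. Its supremum and infimum are therefore its maximum and minimum, and these coincide with $\psi_\eps^p(x)$ and $\psi_\eps^o(x)$ respectively. It follows that
\[
\diam_\R(A)=\sup A-\inf A=\psi_\eps^p(x)-\psi_\eps^o(x)=\Delta_\eps(x).
\]
If one prefers the metric definition $\sup_{a,a'\in A}|a-a'|$, it agrees with $\sup A-\inf A$ for any bounded subset of $\R$.

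There is no real obstacle. The only point needing care is that closedness of $S_\eps(x)$ uses only the fixed scalar $\phi(x)$ for a single $x$. No continuity of $\phi$ in $x$, and no semicontinuity of the set-valued map $S_\eps$, is needed for this pointwise statement.
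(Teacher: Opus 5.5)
Your proof is correct and follows essentially the paper's route: you get nonemptiness of $S(x)\subseteq S_\eps(x)$ from Weierstrass on $f(x,\cdot)$ over compact $Y$, compactness from closedness of the sublevel set, and the interval identity from compactness of the image $F(x,S_\eps(x))$, which makes its $\sup$ and $\inf$ equal to $\psi_\eps^p(x)$ and $\psi_\eps^o(x)$. You simply spell out steps the paper compresses into two sentences, and your remark that only the fixed scalar $\phi(x)$ is needed, not continuity of $\phi$, is accurate.
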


\begin{proof}
Nonemptiness of $S(x)\subseteq S_\eps(x)$ and compactness follow
from continuity of $f(x,\cdot)$ on the compact set $Y$. The
image $F(x,S_\eps(x))$ is then a nonempty compact subset of $\R$,
so its supremum and infimum are attained and
$\Delta_\eps(x)=\sup F(x,S_\eps(x))-\inf F(x,S_\eps(x))$.
\end{proof}

\subsection{Monotonicity in follower tolerance}
\begin{proposition}[Monotonicity]\label{prop:monotone}
Fix $x\in X$. For $0\le\eps_1\le\eps_2$:
$S_{\eps_1}(x)\subseteq S_{\eps_2}(x)$,
$\psi_{\eps_2}^o(x)\le\psi_{\eps_1}^o(x)$,
$\psi_{\eps_2}^p(x)\ge\psi_{\eps_1}^p(x)$, and
$\Delta_{\eps_2}(x)\ge\Delta_{\eps_1}(x)$.
\end{proposition}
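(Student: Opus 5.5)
The argument reduces everything to set inclusion, since $\psi^o_\eps$ and $\psi^p_\eps$ are a minimum and a maximum of the same function $F(x,\cdot)$ over the nested family $S_\eps(x)$.

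\emph{Step 1: inclusion of the response sets.} Let $y\in S_{\eps_1}(x)$. By \eqref{eq:Seps}, $y\in Y$ and $f(x,y)\le\phi(x)+\eps_1$. Since $\eps_1\le\eps_2$, this gives $f(x,y)\le\phi(x)+\eps_2$, so $y\in S_{\eps_2}(x)$. The value $\phi(x)$ does not depend on $\eps$, so nothing more is needed.

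\emph{Step 2: comparison of the extrema.} By Proposition~\ref{prop:attain}, both extrema are attained on each nonempty compact set. Take $y_1\in S_{\eps_1}(x)$ with $F(x,y_1)=\psi^o_{\eps_1}(x)$. Because $y_1\in S_{\eps_2}(x)$,
\[
\psi^o_{\eps_2}(x)\le F(x,y_1)=\psi^o_{\eps_1}(x).
\]
Similarly, take a maximizer $z_1\in S_{\eps_1}(x)$ for $\psi^p_{\eps_1}(x)$. It is feasible for the larger problem, so
\[
\psi^p_{\eps_2}(x)\ge F(x,z_1)=\psi^p_{\eps_1}(x).
\]

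\emph{Step 3: the premium.} Subtracting the two inequalities gives
\[
\Delta_{\eps_2}(x)=\psi^p_{\eps_2}(x)-\psi^o_{\eps_2}(x)\ge\psi^p_{\eps_1}(x)-\psi^o_{\eps_1}(x)=\Delta_{\eps_1}(x).
\]
Equivalently, by \eqref{eq:interval}, this is the fact that the one-dimensional diameter is monotone under inclusion of the images $F(x,S_{\eps_1}(x))\subseteq F(x,S_{\eps_2}(x))$.

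There is no real obstacle here. The only point needing care is that the extrema exist on both sets, so that the inequalities compare genuine attained values. Proposition~\ref{prop:attain} supplies exactly this under Assumption~\ref{ass:standing}.
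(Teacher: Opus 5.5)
Your proof is correct and follows the paper's argument essentially verbatim: inclusion $S_{\eps_1}(x)\subseteq S_{\eps_2}(x)$ from $\eps_1\le\eps_2$, then attainment via Proposition~\ref{prop:attain} so that the minimum over the larger set cannot increase and the maximum cannot decrease, with $\Delta$ following by combining the two inequalities. Your explicit witnesses $y_1,z_1$ and the remark about monotonicity of the one-dimensional diameter are just a slightly more detailed write-up of the same step.
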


\begin{proof}
If $y\in S_{\eps_1}(x)$, then $f(x,y)\le\phi(x)+\eps_1\le\phi(x)+\eps_2$,
so $y\in S_{\eps_2}(x)$. By Proposition~\ref{prop:attain} the
minima and maxima are attained, so minimization over a larger set
cannot increase the minimum and maximization cannot decrease the
maximum. The stated inequalities follow.
\end{proof}

\subsection{A diameter-based Lipschitz bound}
The following bound is elementary but useful as a diagnostic
device: it connects the ambiguity premium to a response-set
geometry quantity (the diameter of the $\eps$-response set) and a
leader-side Lipschitz modulus. We present it as a diagnostic
bound rather than a deep structural theorem.

\begin{theorem}[Diameter-based bound]\label{thm:diam-bound}
Assume $F(x,\cdot)$ is $L_F(x)$-Lipschitz continuous on $Y$, i.e.
\begin{equation}\label{eq:LF}
  |F(x,y)-F(x,y')|\le L_F(x)\|y-y'\|
  \qquad\forall y,y'\in Y.
\end{equation}
Then for every $\eps\ge 0$
\begin{equation}\label{eq:diam-bound}
  \Delta_\eps(x)\le L_F(x)\cdot\diam(S_\eps(x)).
\end{equation}
\end{theorem}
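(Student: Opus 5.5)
The plan is to reduce the bound to a pointwise Lipschitz estimate between the maximizer and the minimizer of $F(x,\cdot)$ over $S_\eps(x)$. Fix $x\in X$ and $\eps\ge 0$. By Proposition~\ref{prop:attain}, $S_\eps(x)$ is nonempty and compact, and both $\psi_\eps^p(x)$ and $\psi_\eps^o(x)$ are attained. So I will choose $y^p\in S_\eps(x)$ with $F(x,y^p)=\psi_\eps^p(x)$ and $y^o\in S_\eps(x)$ with $F(x,y^o)=\psi_\eps^o(x)$.

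By the definition \eqref{eq:premium-def},
\[
  \Delta_\eps(x)=F(x,y^p)-F(x,y^o)=|F(x,y^p)-F(x,y^o)|,
\]
where the absolute value is harmless because $\Delta_\eps(x)\ge 0$. Since $S_\eps(x)\subseteq Y$, the Lipschitz hypothesis \eqref{eq:LF} applies to the pair $y^p,y^o$ and gives $\Delta_\eps(x)\le L_F(x)\|y^p-y^o\|$. Because both points lie in $S_\eps(x)$, the definition of diameter gives $\|y^p-y^o\|\le\diam(S_\eps(x))$. Combining this with $L_F(x)\ge 0$ yields \eqref{eq:diam-bound}.

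There is no real obstacle. The only point needing care is to justify working with actual optimizers rather than approximating sequences, and attainment from Proposition~\ref{prop:attain} takes care of that. The diameter is finite because $S_\eps(x)$ is compact, so the right-hand side is a genuine real number. If one preferred not to invoke attainment, an equivalent route is to take near-optimal points $y_k^p,y_k^o\in S_\eps(x)$ with $F(x,y_k^p)-F(x,y_k^o)\to\Delta_\eps(x)$, apply the same two inequalities to each pair, and pass to the limit.

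Finally, I would remark that the bound can be sharpened to a Lipschitz constant of $F(x,\cdot)$ restricted to $S_\eps(x)$ alone, since \eqref{eq:LF} is only ever used for points of $S_\eps(x)$. This observation is what the subsequent $O(\sqrt{\eps})$ estimate will combine with a quadratic-growth bound on $\diam(S_\eps(x))$.
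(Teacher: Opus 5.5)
Your proposal is correct and follows essentially the same route as the paper's proof. Both take the attained maximizer and minimizer of $F(x,\cdot)$ over $S_\eps(x)$ from Proposition~\ref{prop:attain}, apply the Lipschitz estimate to that pair, and bound their distance by $\diam(S_\eps(x))$.
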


\begin{proof}
By Proposition~\ref{prop:attain} there exist
$y^+,y^-\in S_\eps(x)$ with $F(x,y^+)=\psi_\eps^p(x)$ and
$F(x,y^-)=\psi_\eps^o(x)$. Then
\[
  \Delta_\eps(x)=F(x,y^+)-F(x,y^-)
  \le L_F(x)\|y^+-y^-\|
  \le L_F(x)\diam(S_\eps(x)).
\]
\end{proof}

Theorem~\ref{thm:diam-bound} has two useful consequences. First,
one can upper-bound implementation risk with any tractable
bound on the diameter of the $\eps$-response set, regardless of the
specific leader objective. Second, the modulus $L_F(x)$ is
explicitly computable in most applications (e.g., $L_F(x)$ can be
taken as a bound on $\|\nabla_y F(x,\cdot)\|$ on $Y$).

\subsection{Rate under quadratic growth}
The diameter bound is most informative when $S_\eps(x)$ shrinks
with $\eps$. A standard sufficient condition for this is
quadratic growth of the lower-level objective around its optimal
set~\cite{bonnans2013perturbation,drusvyatskiy2018error,fabian2010error}.

\begin{assumption}[Quadratic lower-level growth]\label{ass:qg}
There exists $\mu(x)>0$ such that
\begin{equation}\label{eq:qg}
  f(x,y)\ge \phi(x)+\mu(x)\,\dist\bigl(y,S(x)\bigr)^2
  \qquad \forall y\in Y.
\end{equation}
\end{assumption}

If $Y$ is convex and $f(x,\cdot)$ is $2\mu(x)$-strongly convex
on $Y$, then $S(x)$ is a singleton and Assumption~\ref{ass:qg}
holds with the displayed $\mu(x)$. More generally, quadratic
growth is a standard ingredient of error-bound
theory~\cite{fabian2010error,drusvyatskiy2018error,luo1994extension}
and holds under a variety of second-order sufficient conditions
that do not require convexity of the lower-level objective.

\begin{corollary}[$O(\sqrt\eps)$ rate]\label{cor:sqrt-rate}
Under Assumptions~\ref{ass:standing}, \eqref{eq:LF},
and~\ref{ass:qg},
\begin{equation}\label{eq:diam-sqrt}
  \diam\bigl(S_\eps(x)\bigr)\le \diam(S(x))+2\sqrt{\eps/\mu(x)},
\end{equation}
and
\begin{equation}\label{eq:delta-sqrt}
  \Delta_\eps(x)\le L_F(x)\Bigl(\diam(S(x))+2\sqrt{\eps/\mu(x)}\Bigr).
\end{equation}
If in addition $S(x)=\{y^\star(x)\}$ is a singleton, then
$\Delta_\eps(x)\le 2L_F(x)\sqrt{\eps/\mu(x)}$.
\end{corollary}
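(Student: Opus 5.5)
The plan is to establish the diameter estimate \eqref{eq:diam-sqrt} first and then feed it into Theorem~\ref{thm:diam-bound}. The key observation is that quadratic growth confines every $\eps$-optimal response to a tube around the exact solution set. Fix $x$ and take any $y\in S_\eps(x)$. By the definition \eqref{eq:Seps} together with Assumption~\ref{ass:qg},
\[
  \mu(x)\,\dist\bigl(y,S(x)\bigr)^2\le f(x,y)-\phi(x)\le \eps,
\]
so $\dist(y,S(x))\le\sqrt{\eps/\mu(x)}$. Since $S(x)$ is nonempty and compact (Proposition~\ref{prop:attain} with $\eps=0$), this distance is attained: there is a projection point $\bar y\in S(x)$ with $\|y-\bar y\|=\dist(y,S(x))$.

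For the diameter bound, take arbitrary $y,y'\in S_\eps(x)$ with projections $\bar y,\bar y'\in S(x)$. The triangle inequality gives
\[
  \|y-y'\|\le\|y-\bar y\|+\|\bar y-\bar y'\|+\|\bar y'-y'\|\le 2\sqrt{\eps/\mu(x)}+\diam(S(x)).
\]
Taking the supremum over $y,y'$ yields \eqref{eq:diam-sqrt}. If one prefers not to rely on attainment, $\bar y,\bar y'$ can be chosen as $\delta$-approximate projections and $\delta$ sent to zero. Compactness makes this unnecessary, though.

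Inserting \eqref{eq:diam-sqrt} into \eqref{eq:diam-bound} of Theorem~\ref{thm:diam-bound} gives \eqref{eq:delta-sqrt} directly, since $L_F(x)\ge 0$. In the singleton case $S(x)=\{y^\star(x)\}$ we have $\diam(S(x))=0$, which gives $\Delta_\eps(x)\le 2L_F(x)\sqrt{\eps/\mu(x)}$. There is no real obstacle here. The only points needing care are that the quadratic-growth inequality is applied to points of $Y$ (which holds because $S_\eps(x)\subseteq Y$) and that the projection onto $S(x)$ exists; both are covered by the standing assumptions.
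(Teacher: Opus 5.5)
Your proposal is correct and follows essentially the same route as the paper: quadratic growth gives $\dist(y,S(x))\le\sqrt{\eps/\mu(x)}$ for every $y\in S_\eps(x)$, the triangle inequality gives the diameter bound, and Theorem~\ref{thm:diam-bound} together with $\diam(S(x))=0$ in the singleton case finishes the argument. Your explicit use of projection points onto the compact set $S(x)$ just spells out the triangle-inequality step that the paper leaves implicit.
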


\begin{proof}
For any $y\in S_\eps(x)$, quadratic growth
gives $\mu(x)\dist(y,S(x))^2\le f(x,y)-\phi(x)\le\eps$, so
$\dist(y,S(x))\le\sqrt{\eps/\mu(x)}$. Hence
$S_\eps(x)\subseteq S(x)+B(0,\sqrt{\eps/\mu(x)})$. The diameter
bound $\diam(S_\eps(x))\le\diam(S(x))+2\sqrt{\eps/\mu(x)}$ follows
from the triangle inequality. Combining
with~\eqref{eq:diam-bound} yields~\eqref{eq:delta-sqrt}. The
singleton case gives $\diam(S(x))=0$.
\end{proof}

\begin{remark}[Local versus global quadratic growth]\label{rem:qg-local}
Assumption~\ref{ass:qg} is stated globally on $Y$. For the
$O(\sqrt\eps)$ rate as $\eps\downarrow 0$, only a local form is
needed: it suffices that the inequality~\eqref{eq:qg} holds on a
neighborhood of $S(x)$ in $Y$, with the constant $\mu(x)$ adapted
to that neighborhood. The global form is mathematically safer but
strictly stronger than what is required.
\end{remark}

\begin{remark}[Interpretation]\label{rem:sqrt}
Corollary~\ref{cor:sqrt-rate} explains why the ambiguity premium
in the strictly convex case study of
Section~\ref{sec:case2} scales roughly like $\sqrt\eps$ and
remains positive even when the exact follower optimum is unique.
It also identifies the two modeling levers a designer has for
controlling implementation risk: reducing $L_F(x)$ (flatten the
leader objective along likely redispatch directions) or
increasing $\mu(x)$ (sharpen the lower-level objective).
\end{remark}

\subsection{Semicontinuity and continuity under response-set
continuity}
The following semicontinuity result is used below to justify the
use of sampling-based frontier approximations. Note that the
optimistic and pessimistic value functions have opposite
semicontinuity directions in general; full continuity of both
requires continuity of the correspondence
$(x,\eps)\mapsto S_\eps(x)$ in the sense required by Berge's
maximum theorem.

\begin{proposition}[Semicontinuity and continuity under response-set
continuity]\label{prop:continuity}
Under Assumption~\ref{ass:standing}, the correspondence
\[
(x,\eps)\longmapsto S_\eps(x):=\{y\in Y:\ f(x,y)\le \phi(x)+\eps\}
\]
has nonempty compact values and is upper semicontinuous on
$X\times[0,\infty)$. Furthermore, the marginal functions
$\psi_\eps^p$ and $\Delta_\eps$ are upper semicontinuous, and
$\psi_\eps^o$ is lower semicontinuous, in $(x,\eps)$.

If, in addition, Assumption~\ref{ass:qg} holds and there exists a
neighborhood $U$ of $x_0$ and a constant $\underline\mu>0$ such
that
\[
\mu(x)\ge \underline\mu\qquad \text{for all }x\in U\cap X,
\]
then $S_\eps(x)$ is lower semicontinuous at $(x_0,0)$.

Moreover, if we additionally assume that $Y$ is convex and
$f(x,\cdot)$ is convex on $Y$ for all $x$ near $x_0$, then
$S_\eps(x)$ is lower semicontinuous at $(x_0,\eps_0)$ for any
$\eps_0>0$.

Consequently, under these combined conditions,
$\psi_\eps^o$, $\psi_\eps^p$, and $\Delta_\eps$ are continuous at
$(x_0,\eps_0)$ for all $\eps_0\ge 0$.
\end{proposition}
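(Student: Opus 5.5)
Upper semicontinuity comes first, via a closed-graph argument. The graph $\{(x,\eps,y):y\in Y,\ f(x,y)-\phi(x)-\eps\le 0\}$ is closed in $X\times[0,\infty)\times Y$, because $f$ and $\phi$ are continuous. We use the continuity of $\phi$ stated after \eqref{eq:phi-S}, which follows from Berge's theorem on compact $Y$. Since the values lie in the compact set $Y$, a closed graph gives upper semicontinuity. Nonemptiness and compactness are Proposition~\ref{prop:attain}.

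The semicontinuity of the marginal functions then follows from the standard sequential argument. Take $(x_k,\eps_k)\to(x,\eps)$ and maximizers $y_k\in S_{\eps_k}(x_k)$ with $F(x_k,y_k)=\psi^p_{\eps_k}(x_k)$. Pass to a subsequence realizing the $\limsup$ and converging to some $\bar y$. The closed graph gives $\bar y\in S_\eps(x)$, so
\[
\limsup_k \psi^p_{\eps_k}(x_k)=F(x,\bar y)\le\psi^p_\eps(x).
\]
The symmetric argument shows $\psi^o$ is lower semicontinuous. Then $\Delta=\psi^p+(-\psi^o)$ is a sum of two upper semicontinuous functions, hence upper semicontinuous.

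Next, lower semicontinuity at $(x_0,0)$. Fix $y_0\in S_0(x_0)$ and a sequence $(x_k,\eps_k)\to(x_0,0)$. We must produce $y_k\in S_{\eps_k}(x_k)$ with $y_k\to y_0$. The natural choice is $y_k\in\proj_{S(x_k)}(y_0)$, a nearest point in $S(x_k)\subseteq S_{\eps_k}(x_k)$, which exists by compactness. Quadratic growth with the uniform modulus $\underline\mu$ gives
\[
\underline\mu\,\dist(y_0,S(x_k))^2\le f(x_k,y_0)-\phi(x_k).
\]
The right-hand side tends to $f(x_0,y_0)-\phi(x_0)=0$ by continuity of $f$ and $\phi$. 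Hence $\dist(y_0,S(x_k))\to0$, and so $y_k\to y_0$. This step is routine once the uniform $\underline\mu$ is available.

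The main obstacle is lower semicontinuity at $(x_0,\eps_0)$ with $\eps_0>0$ under convexity; here I would use a Slater-type interpolation. Fix $y\in S_{\eps_0}(x_0)$ and pick $\bar y\in S(x_0)$, so $f(x_0,\bar y)-\phi(x_0)=0<\eps_0$ is strictly feasible. For $t\in(0,1)$ set $y^t:=(1-t)y+t\bar y\in Y$. Convexity of $f(x_0,\cdot)$ gives
\[
f(x_0,y^t)-\phi(x_0)\le(1-t)\eps_0<\eps_0,
\]
so $y^t$ is strictly feasible. Strict feasibility is an open condition, since $(x,\eps)\mapsto f(x,y^t)-\phi(x)-\eps$ is continuous. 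Therefore $y^t\in S_{\eps_k}(x_k)$ for all large $k$. A diagonal choice $t_j\downarrow0$ then yields $y_k\to y$. Note that only convexity at $x_0$ is actually needed here.

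Finally, Berge's maximum theorem applies at $(x_0,\eps_0)$ for every $\eps_0\ge0$. The correspondence is continuous there, being both upper and lower semicontinuous with compact values, and $F$ is continuous. Hence $\psi^o$ and $\psi^p$ are continuous at $(x_0,\eps_0)$, and so is their difference $\Delta$. At $\eps_0=0$, the one-sided sequences from $\eps<0$ do not arise because the domain is $[0,\infty)$.
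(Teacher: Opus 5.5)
Your proposal is correct and follows essentially the same route as the paper's proof: a closed graph for upper semicontinuity, Berge-type semicontinuity of the marginal functions, a nearest-point selection with uniform quadratic growth at $(x_0,0)$, and, for $\eps_0>0$, convex interpolation toward a point of $S(x_0)$ followed by a diagonal argument. The differences are cosmetic: you prove the marginal semicontinuity sequentially instead of citing Berge, and your interpolation also covers the strictly feasible case, which the paper handles separately.
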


\subsection{A formal local expansion under stronger regularity}
\label{sec:formal-expansion}
The bound in Corollary~\ref{cor:sqrt-rate} is order-of-magnitude.
Under stronger regularity one can describe the leading-order
behavior of $\Delta_\eps(x)$ in $\eps$ more precisely. We present
the discussion as a formal local expansion in the spirit of
classical second-order sensitivity
analysis~\cite{bonnans2013perturbation,shapiro1988sensitivity,fiacco1983introduction} rather than as a general theorem; the
relevant structural conditions can fail at active constraints
or under degeneracy.

Throughout this subsection we assume the singleton case
$S(x)=\{y^\star(x)\}$, $f$ twice continuously differentiable on a
neighborhood of $y^\star(x)$ in $Y$, and a second-order sufficient
condition at $y^\star(x)$. Let $\mathcal T_Y(y^\star(x))$ denote
the tangent cone to $Y$ at $y^\star(x)$. Following standard
practice~\cite{bonnans2013perturbation}, define the critical cone of zero
first-order increase
\begin{equation}\label{eq:crit-cone}
  \mathcal C(x)
  :=\{d\in\mathcal T_Y(y^\star(x)):
    \nabla_y f(x,y^\star(x))^\top d=0\}.
\end{equation}
Under standard constraint qualifications and local uniqueness, the
dominant $\sqrt\eps$ part of the sublevel-set displacement at
$y^\star(x)$ is governed by the trust-region-type set
\begin{equation}\label{eq:Dx}
  D(x)
  :=\bigl\{d\in\mathcal C(x):
    \tfrac12\,d^\top\nabla_{yy}^2 f(x,y^\star(x))\,d\le 1\bigr\}.
\end{equation}
Along noncritical feasible directions, $f$ has nonzero first-order
growth, so the sublevel displacement is at most $O(\eps)$ rather
than $O(\sqrt\eps)$; the $\sqrt\eps$ contribution therefore comes
from $\mathcal C(x)$.

Formally, when the first-order variation of $F$ along the critical
cone is nonzero, the leading contribution to $\Delta_\eps(x)$ takes
the form
\begin{equation}\label{eq:directional}
  \Delta_\eps(x)
  =\sqrt{\eps}\biggl[
    \sup_{d\in D(x)}\nabla_y F(x,y^\star(x))^\top d
    -\inf_{d\in D(x)}\nabla_y F(x,y^\star(x))^\top d
  \biggr]
  +o(\sqrt\eps).
\end{equation}
This expression involves two trust-region-type subproblems and is
computable from local first- and second-order data of $F$ and $f$
at $y^\star(x)$. When $D(x)$ is centrally symmetric---for instance,
when no inequality constraint of the lower-level problem is active
at $y^\star(x)$, so that $\mathcal C(x)$ is a linear subspace and
$D(x)$ is an ellipsoid---\eqref{eq:directional} reduces to
\begin{equation}\label{eq:directional-sym}
  \Delta_\eps(x)=2\sqrt{\eps}\,
  \sup_{d\in D(x)}\nabla_y F(x,y^\star(x))^\top d+o(\sqrt\eps).
\end{equation}
In the presence of active inequality constraints, $\mathcal C(x)$
need not be a subspace and $D(x)$ need not be symmetric, in which
case the general form~\eqref{eq:directional} is required. We do
not use \eqref{eq:directional}--\eqref{eq:directional-sym}
elsewhere in the paper; they are recorded here to indicate that
the order-of-magnitude bound of Corollary~\ref{cor:sqrt-rate} can
in principle be sharpened.

\section{Equilibrium Reformulations: Background and Roles}
\label{sec:reform}
The preceding section treats $\Delta_\eps(x)$ as a mathematical
object attached to a fixed leader decision. To use it as a
practical screening diagnostic, one must compute or approximate
$\psi_\eps^o(x)$ and $\psi_\eps^p(x)$ repeatedly across
candidate decisions. This section recalls three existing
bilevel--GNEP reformulations and assigns each a computational
role in that task. The purpose is not to establish new
equivalence results, but to make explicit which reformulation
supports which part of the diagnostic workflow.

The results used here are taken from~\cite{lampariello2017bridge,
lampariello2020numerically,lampariello2019standard};
see also~\cite{facchinei2010generalized,harker1991generalized,pang2005quasi,kulkarni2012variational,
dreves2011solution} for the GNEP machinery.

Recall the coupling constraint
$g_\eps(x,y)=f(x,y)-\phi(x)-\eps$.

\paragraph{Standing smoothness assumptions for this section.}
For all KKT displays in this section, assume in addition that $X$
and $Y$ are closed convex sets and that $F$ and $f$ are
continuously differentiable on a neighborhood of every candidate
point. If $X$ or $Y$ is nonconvex, the same formal expressions
should be interpreted with an appropriate limiting normal cone
(Mordukhovich or Clarke) and the corresponding constraint
qualifications; we do not develop the nonconvex case further.

\subsection{Optimistic model: global-solution reformulation (G1)}
\label{sec:G1}
The two-player GNEP
\begin{equation}\tag{G1}\label{eq:G1}
\begin{aligned}
  &\text{Player 1:}\quad \min_{x,y}\ F(x,y)
  \quad \text{s.t. } x\in X,\ y\in Y,\ f(x,y)\le f(x,v)+\eps,\\
  &\text{Player 2:}\quad \min_{v}\ f(x,v)
  \quad \text{s.t. } v\in Y,
\end{aligned}
\end{equation}
has the property that, at any equilibrium
$(x^\star,y^\star,v^\star)$, player 2's optimality gives
$f(x^\star,v^\star)=\phi(x^\star)$ and hence
$f(x^\star,y^\star)\le\phi(x^\star)+\eps$, so
$(x^\star,y^\star)$ is feasible for the optimistic bilevel
problem. The equivalence between \eqref{eq:G1} and the optimistic
bilevel problem is not, however, a formal consequence of the
displayed constraints alone. Under player-1 deviations the
constraint $f(x,y)\le f(x,v)+\eps$ is evaluated at fixed $v$, so
it does not by itself encode $f(x,v)=\phi(x)$. Equivalence with
global optimistic solutions relies on the structural assumptions
stated in~\cite{lampariello2017bridge}, which ensure that
equilibria of the game select globally valid optimistic bilevel
solutions. We use \eqref{eq:G1} only as a conceptual device for
defining a stationarity residual and for organizing the
computational workflow; all equivalence claims are inherited from
the cited results.

\paragraph{KKT system.}
Let $\lambda\ge 0$ be player 1's multiplier on the coupling
constraint
$h(x,y,v):=f(x,y)-f(x,v)-\eps\le 0$, and note that player 1
controls $(x,y)$ while player 2 controls $v$. No coupling
multiplier appears in player 2's problem, because player 2's
feasible set $Y$ does not depend on $(x,y)$. The KKT system of
the GNEP is
\begin{align}
  0&\in\nabla_x F(x,y)+\lambda\bigl(\nabla_x f(x,y)-\nabla_x f(x,v)\bigr)+N_X(x),
  \label{eq:G1-kkt-x}\\
  0&\in\nabla_y F(x,y)+\lambda\,\nabla_y f(x,y)+N_Y(y),
  \label{eq:G1-kkt-y}\\
  0&\in \nabla_v f(x,v)+N_Y(v),
  \label{eq:G1-kkt-v-fixed}\\
  0&\le\lambda\ \perp\ f(x,y)-f(x,v)-\eps\le 0.
  \label{eq:G1-kkt-comp}
\end{align}
We need to emphasize that in~\eqref{eq:G1-kkt-v-fixed} player 2 minimizes
$f(x,\cdot)$ on $Y$ and sees no coupling constraint; this is the
source of the identity $f(x^\star,v^\star)=\phi(x^\star)$ at
equilibrium.

\subsection{Optimistic model: convexified stationarity reformulation (G2)}
\label{sec:G2}
Replacing $f(x,v)$ in the coupling constraint by its first-order
lower approximation in the $x$-variable yields
\begin{equation}\tag{G2}\label{eq:G2}
\begin{aligned}
  &\text{Player 1:}\quad \min_{x,y}\ F(x,y)
  \quad \text{s.t. } x\in X,\ y\in Y,\
  f(x,y)\le f(u,v)+\nabla_1 f(u,v)^\top(x-u)+\eps,\\
  &\text{Player 2:}\quad \min_{u,v}\ f(x,v)
  \quad \text{s.t. } u=x,\ v\in Y.
\end{aligned}
\end{equation}
Under the structural assumptions stated
in~\cite{lampariello2020numerically}---which include convexity of
$f(\cdot,y)$ in the leader variable and appropriate constraint
qualifications---solutions of \eqref{eq:G2} correspond to
stationary points of the optimistic bilevel problem and vice
versa. We do not reproduce these conditions here, and we use
\eqref{eq:G2} computationally as a stationarity-targeting
heuristic.

\subsection{Pessimistic model: exact lower-equilibrium reformulation (MF)}
\label{sec:MF}
For each $x\in X$ the pessimistic leader value is generated
exactly by the lower-level GNEP
\begin{equation}\tag{MF}\label{eq:MF}
\begin{aligned}
  &\text{Player 1:}\quad \min_{y}\ -F(x,y)
  \quad \text{s.t. } y\in Y,\ f(x,y)\le f(x,v)+\eps,\\
  &\text{Player 2:}\quad \min_{v}\ f(x,v)
  \quad \text{s.t. } v\in Y.
\end{aligned}
\end{equation}
At equilibrium, $f(x,v^\star)=\phi(x)$, and
$F(x,y^\star)=\psi_\eps^p(x)$. The pessimistic bilevel problem is
therefore $\min_{x\in X}\psi_\eps^p(x)$
\cite{lampariello2019standard}.

\paragraph{KKT system.}
With multiplier $\lambda\ge 0$ on player 1's coupling constraint
and nothing on player 2 (which sees only $v\in Y$):
\begin{align}
  0&\in -\nabla_y F(x,y)+\lambda\,\nabla_y f(x,y)+N_Y(y),
  \label{eq:MF-kkt-y}\\
  0&\in \nabla_v f(x,v)+N_Y(v),
  \label{eq:MF-kkt-v-fixed}\\
  0&\le\lambda\ \perp\ f(x,y)-f(x,v)-\eps\le 0.
  \label{eq:MF-kkt-comp}
\end{align}
Note that in~\eqref{eq:MF-kkt-y}, player 1 sees $x$ as fixed, so
only $\nabla_y f(x,y)$ appears in the coupling-constraint
gradient; no $-\nabla_y f(x,v)$ term is present because $v$
belongs to player 2, not player 1.

\paragraph{Nikaido--Isoda gap function.}
An alternative characterization useful for
computation~\cite{nikaido1955note,fukushima1992equivalent,harker1991generalized,von2009optimization}
is the Nikaido--Isoda gap function
\begin{equation}\label{eq:NI}
\begin{aligned}
  \mathcal N_x(y,v)
  :=\;& \sup_{\hat y\in Y:\,f(x,\hat y)\le f(x,v)+\eps}
      \bigl[F(x,\hat y)-F(x,y)\bigr]\\
      &+\sup_{\hat v\in Y}\bigl[f(x,v)-f(x,\hat v)\bigr].
\end{aligned}
\end{equation}
Then $(y,v)$ solves \eqref{eq:MF} iff $\mathcal N_x(y,v)=0$, and
$F(x,y)=\psi_\eps^p(x)$. We use this formulation computationally
in Section~\ref{sec:algo} to give teeth to the claim that the
pessimistic reformulation is doing real work.

We summarize the distinct computational roles that the three reformulations play in Table~\ref{tab:reform-roles}.

\begin{table}[ht]
\centering
\caption{Computational roles of the three GNEP reformulations.}
\label{tab:reform-roles}
\small
\begin{tabular}{@{}lll@{}}
\toprule
Reformulation & Role & Use in this paper\\
\midrule
\eqref{eq:G1} & Global-solution analysis &
Conceptual; not used directly for computation.\\
\eqref{eq:G2} & Stationarity, algorithm design &
Proximal alternating scheme (\S\ref{sec:algo-opt}).\\
\eqref{eq:MF} & Worst-case implementation &
NI-gap penalization (\S\ref{sec:algo-pess}).\\
\bottomrule
\end{tabular}
\end{table}

\section{Computational Workflow}\label{sec:algo}

This section specifies the concrete computational procedures used
in the case studies. We use \eqref{eq:G2} computationally for the
optimistic side (\S\ref{sec:algo-opt}) and a Nikaido--Isoda
penalization of \eqref{eq:MF} for the pessimistic side
(\S\ref{sec:algo-pess}). The frontier is constructed by a
structured scalarization sweep over weights.

\subsection{Optimistic candidate generation via proximal alternating
linearization}
\label{sec:algo-opt}
Let $(x^k,y^k,v^k)$ denote the current iterate with
$v^k\in S(x^k)$. Given $\tau_k>0$, the updated leader--follower
pair solves the proximal subproblem
\begin{equation}\label{eq:prox-sub}
  (x^{k+1},y^{k+1})
  \in\argmin_{(x,y)\in\mathcal K(x^k,v^k)}
  \Bigl\{F(x,y)+\tfrac{\tau_k}{2}\|x-x^k\|^2
  +\tfrac{\tau_k}{2}\|y-y^k\|^2\Bigr\},
\end{equation}
where
\begin{equation}\label{eq:Kk}
  \mathcal K(x^k,v^k):=\{(x,y)\in X\times Y:
  f(x,y)\le f(x^k,v^k)+\nabla_1 f(x^k,v^k)^\top(x-x^k)+\eps\}.
\end{equation}
The follower update is
$v^{k+1}\in\argmin_{v\in Y}f(x^{k+1},v)$. 

The stopping test is
$\max\{\|z^{k+1}-z^k\|,\ r_\mathrm{LL}^\eps(x^k,y^k),\
g_\mathrm{stat}^\eps(x^k,y^k,v^k,\lambda^k)\}\le\texttt{tol}$,
where $z^k=(x^k,y^k)$ and the stationarity residual uses the
Fischer--Burmeister form of the GNEP KKT
system~\eqref{eq:stat-residual}. Under the assumptions required
by the convexified GNEP reformulation of Lampariello and
Sagratella~\cite{lampariello2020numerically}---which include
fully convex lower-level objective, appropriate constraint
qualifications, and algorithm-specific conditions that we do not
verify in full for the case studies---this type of
stationarity-oriented scheme targets stationary points of the
optimistic problem. We use it here as a computational heuristic
rather than as a globally convergent algorithm, and we label
outputs accordingly.

\paragraph{Stationarity residual, formal definition.}
The stationarity residual $g_\mathrm{stat}^\eps(x,y,v,\lambda)$
used in the stopping test is defined through the GNEP KKT system
\eqref{eq:G1-kkt-x}--\eqref{eq:G1-kkt-comp}. Using the
Fischer--Burmeister function
$\varphi_{\mathrm{FB}}(a,b):=a+b-\sqrt{a^2+b^2}$, which satisfies
$\varphi_{\mathrm{FB}}(a,b)=0\iff a\ge 0,\ b\ge 0,\ ab=0$
\cite{fischer1992special,facchinei2003finite}, and the natural-residual
projections $R_X(z,w):=z-\proj_X(z-w)$ and $R_Y$ analogously, we
define
\begin{equation}\label{eq:stat-residual}
\begin{aligned}
  g_\mathrm{stat}^\eps(x,y,v,\lambda)^2
  :=\;& \|R_X(x,\nabla_x F(x,y)
        +\lambda(\nabla_x f(x,y)-\nabla_x f(x,v)))\|^2\\
  & + \|R_Y(y,\nabla_y F(x,y)+\lambda\nabla_y f(x,y))\|^2\\
  & + \|R_Y(v,\nabla_v f(x,v))\|^2\\
  & + \varphi_{\mathrm{FB}}(\lambda,\,-[f(x,y)-f(x,v)-\eps])^2.
\end{aligned}
\end{equation}
This quantity uses only first derivatives of $F$ and $f$ and
requires no derivative of $\phi$. It is evaluated at the
multiplier $\lambda\ge 0$ returned by the proximal subproblem
solver without the optimization over $\lambda$.

\subsection{Pessimistic evaluation via NI-gap penalization}
\label{sec:algo-pess}
For a candidate decision $x\in X$, we evaluate $\psi_\eps^p(x)$ by
applying a penalty to the Nikaido--Isoda gap~\eqref{eq:NI}. This
is the concrete computational role of the lower-equilibrium
reformulation.

\paragraph{Inner evaluation.}
Fix $x$. For a penalty $\sigma>0$, compute
\begin{equation}\label{eq:ni-penalty}
  (y^\sigma,v^\sigma)
  \in\argmin_{y\in Y,v\in Y}
  \Bigl\{-F(x,y)+\sigma\cdot\bigl[\mathcal N_x(y,v)\bigr]_+\Bigr\},
\end{equation}
where $[\cdot]_+$ denotes nonnegative truncation (included for
numerical safety, since $\mathcal N_x(y,v)\ge 0$ throughout).
In the compact setting, exact global minimization of $-F(x,y) + \sigma \mathcal{N}_x(y,v)$ along a schedule $\sigma_t\uparrow \infty$ yields accumulation points with vanishing NI gap under standard penalty arguments~\cite{fukushima1992equivalent,harker1991generalized,von2009optimization}, so that $F(x,y^\sigma)\to\psi_\eps^p(x)$.
In our numerical experiments, however, the penalized
subproblems are nonconvex and are solved locally with SLSQP from
multiple starts; the reported pessimistic values should
therefore be interpreted as certified only by the achieved NI
gaps and status labels, not as global pessimistic values. We use
$\sigma\in\{10^1,10^2,10^3,10^4\}$ and stop when $\mathcal N_x$
falls below $10^{-6}$; the achieved gap is reported alongside
each pessimistic evaluation in the supplementary code.

\paragraph{Outer search.}
To solve $\min_{x\in X}\psi_\eps^p(x)$, we combine the NI-gap
inner evaluation with a derivative-free outer search using
multistart Powell / Nelder--Mead, as in
\cite{conn2009introduction,audet2017derivative}. We report multistart
counts, starting points, and status labels with the results.

\subsection{Frontier construction via scalarization sweep}
\label{sec:algo-frontier}
For each weight $\omega\in[0,1]$ define the scalarized leader
problem
\begin{equation}\label{eq:scalar-sweep}
  \min_{x\in X}\Bigl\{(1-\omega)\psi_\eps^o(x)+\omega\,\Delta_\eps(x)\Bigr\}.
\end{equation}
Let $\omega$ range over a deterministic grid
$\omega_j=j/(J-1)$, $j=0,\dots,J-1$. 
Each subproblem \eqref{eq:scalar-sweep} is solved by a black-box multistart
procedure, with inner evaluations of $\psi_\eps^o$ and
$\psi_\eps^p$ as above. The weighted-sum sweep recovers
\emph{supported} nondominated points, i.e.\ points on the convex
hull of the attainable objective set~\cite{ehrgott2005multicriteria,
miettinen1999nonlinear}. In nonconvex objective images, it need not
recover unsupported portions of the frontier, which motivates
the Latin-hypercube fill of $N_\mathrm{LHS}$ additional samples.
Compared to unstructured uniform sampling, this design covers
the supported frontier explicitly through the $\omega$-grid and
fills the interior more efficiently~\cite{mckay2000comparison,
viana2016tutorial}.

\paragraph{Status labels.} Consistent with standard computational reporting
conventions~\cite{dolan2002benchmarking,gould2015cutest}, we label
outputs as \emph{converged}, \emph{incumbent}, \emph{heuristic},
or \emph{empirical Pareto}. Only \emph{converged} outputs have
passed the full stopping test; \emph{incumbents} are returned at
iteration limits; \emph{heuristic} denotes manually specified
benchmarks; \emph{empirical Pareto} denotes undominated points
within the sampled candidate set.

\subsection{Assumptions used where}
\label{sec:assum-map}
Because the paper moves between compactness, Lipschitz continuity,
quadratic growth, differentiability, convexity, and GNEP
stationarity, Table~\ref{tab:assum-map} maps each result to the
assumptions it requires. The standing assumptions
(Assumption~\ref{ass:standing}: $X,Y$ compact and $F,f$
continuous) are used throughout; additional assumptions are
introduced only where they are needed.

\begin{table}[ht]
\centering
\caption{Map of assumptions to results.}
\label{tab:assum-map}
\small
\begin{tabular}{@{}p{3.6cm}p{6.0cm}p{4.6cm}@{}}
\toprule
Result & Main assumptions & Purpose \\
\midrule
Prop.~\ref{prop:attain} & Standing (compactness, continuity) & Attainment of $\psi^o,\psi^p$; one-dim.\ diameter form. \\
Prop.~\ref{prop:monotone} & Standing & Monotonicity in $\eps$. \\
Thm.~\ref{thm:diam-bound} & Standing; $F(x,\cdot)$ Lipschitz on $Y$ with modulus $L_F(x)$ & Diameter-based bound. \\
Cor.~\ref{cor:sqrt-rate} & Above plus quadratic lower-level growth (Ass.~\ref{ass:qg}) & $O(\sqrt\eps)$ rate. \\
Eq.~\eqref{eq:directional}--\eqref{eq:directional-sym} & Standing; $f$ twice continuously differentiable; second-order sufficient condition at $y^\star(x)$; $S(x)$ singleton & Directional expansion. \\
Prop.~\ref{prop:continuity}, semicontinuity part & Standing & USC of $S_\eps$, USC of $\psi^p$, LSC of $\psi^o$. \\
Prop.~\ref{prop:continuity}, continuity part & Above plus correspondence continuity of $S_\eps$ at $(x_0,\eps_0)$ & Continuity of $\psi^o,\psi^p,\Delta_\eps$. \\
GNEP KKT systems (\ref{eq:G1-kkt-x})--(\ref{eq:G1-kkt-comp}), (\ref{eq:MF-kkt-y})--(\ref{eq:MF-kkt-comp}) & $F,f$ continuously differentiable; appropriate constraint qualifications & First-order conditions. \\
Stationarity residual~\eqref{eq:stat-residual} & Continuous differentiability of $F,f$ on a neighborhood of the candidate point & Computational diagnostic. \\
Proximal scheme~\eqref{eq:prox-sub} & Convexity assumptions of~\cite{lampariello2020numerically} (full lower-level convexity, CQ, etc.) & Stationarity-targeting heuristic. \\
NI penalization~\eqref{eq:ni-penalty} & Compactness; exact global minimization (theory) / local SLSQP (practice) & Pessimistic evaluation. \\
\bottomrule
\end{tabular}
\end{table}

\section{Case Study 1: Stackelberg Pricing on a Parallel-Link
Network with Genuine Follower Multiplicity}
\label{sec:case1}
This case study is designed to exercise the diagnostic on an
instance where the follower's optimal response is genuinely
set-valued along a positive-dimensional, codimension-one
indifference subset of the leader's decision region. It is
deliberately small, so that all quantities can be written down
in closed form and the framework can be compared against exact
values.

\subsection{Model}
A planner sets tolls $x=(x_1,x_2)\in X:=[0,2]^2$ on two parallel
links between a single origin-destination pair with unit demand.
Link $i$ has an access cost $a_i$; we use $a=(1.0,1.2)$. A
representative user chooses flow split $y=(y_1,y_2)\in Y$ with
\begin{equation}\label{eq:case1-Y}
  Y:=\{y\in\R_+^2:y_1+y_2=1\},
\end{equation}
and minimizes the linear travel cost
\begin{equation}\label{eq:case1-f}
  f(x,y):=(a_1+x_1)y_1+(a_2+x_2)y_2.
\end{equation}
When $a_1+x_1\ne a_2+x_2$, the follower's optimum is the unique
unit mass on the cheaper link; when $a_1+x_1=a_2+x_2$, every
$y\in Y$ is optimal. Thus the follower's exact optimal set is
\begin{equation}\label{eq:case1-S}
  S(x)=
  \begin{cases}
    \{(1,0)\} & \text{if }a_1+x_1<a_2+x_2,\\
    \{(0,1)\} & \text{if }a_1+x_1>a_2+x_2,\\
    Y         & \text{if }a_1+x_1=a_2+x_2.
  \end{cases}
\end{equation}
Along the indifference line $\mathcal L:=\{x\in X:x_2=x_1-0.2\}$,
the follower's exact optimum is an entire line segment.

The planner has a system-cost objective that balances congestion
and revenue:
\begin{equation}\label{eq:case1-F}
  F(x,y):=
  c_1 y_1^2+c_2 y_2^2
  -\alpha\,(x_1 y_1+x_2 y_2)
  +\beta\,(x_1^2+x_2^2).
\end{equation}
We use $c_1=1.5$, $c_2=1.0$, $\alpha=0.3$, $\beta=0.05$. The term
$c_1 y_1^2+c_2 y_2^2$ is a stylized quadratic congestion cost,
$\alpha(x\cdot y)$ is toll revenue valued by the planner, and
$\beta\|x\|^2$ discourages extreme tolls. Both $X$ and $Y$ are
compact; $F$ is quadratic and $f$ is linear, so
Assumptions~\ref{ass:standing} and the Lipschitz condition on
$F(x,\cdot)$ are satisfied.

\subsection{Explicit ambiguity premium}
Along $\mathcal L$ the set $S(x)=Y$ is the whole simplex, so
\begin{equation}\label{eq:case1-delta-L}
  \Delta_0(x)=\max_{y\in Y}F(x,y)-\min_{y\in Y}F(x,y)
  \qquad\text{for }x\in\mathcal L.
\end{equation}
With unit demand, parameterize $y=(t,1-t)$ for $t\in[0,1]$. The
leader's objective on $Y$ for $x\in\mathcal L$ is
\begin{equation}\label{eq:case1-F-t}
  F(x,(t,1-t))
  =c_1 t^2+c_2(1-t)^2-\alpha[x_1 t+x_2(1-t)]+\beta\|x\|^2,
\end{equation}
a strictly convex quadratic in $t$. Expanding,
\begin{equation}\label{eq:case1-F-quadratic}
  F(x,(t,1-t))
  =(c_1+c_2)\,t^2
  +\bigl[-2c_2-\alpha(x_1-x_2)\bigr]\,t
  +\text{const}(x),
\end{equation}
where $\text{const}(x)=c_2-\alpha x_2+\beta\|x\|^2$. The first-order
condition $\partial_t F=0$ reads
$2(c_1+c_2)\,t-2c_2-\alpha(x_1-x_2)=0$, so the unconstrained
interior minimizer is
\begin{equation}\label{eq:case1-tstar}
  t^\star(x)
  =\frac{2c_2+\alpha(x_1-x_2)}{2(c_1+c_2)}
  =\frac{c_2+\tfrac12\alpha(x_1-x_2)}{c_1+c_2},
\end{equation}
clipped to $[0,1]$; the right-hand equality is the form used in
the supplementary code, obtained by dividing numerator and
denominator by $2$. The maximum of the strictly convex quadratic
on $[0,1]$ is attained at an endpoint $t\in\{0,1\}$. For the
nominal-optimal point $x=(1.6,1.4)\in\mathcal L$, which minimizes
$\psi_0^o$ on $\mathcal L\cap X$,
\eqref{eq:case1-tstar} gives
$t^\star=(2\cdot 1.0+0.3\cdot 0.2)/(2\cdot 2.5)=2.06/5.0=0.412$,
and the pessimistic maximum is attained at $t=0$, giving
\begin{equation}\label{eq:case1-exact-delta}
  \Delta_0((1.6,1.4))=
  F((1.6,1.4),(1,0))-F((1.6,1.4),(0.412,0.588))
  \approx 0.864,
\end{equation}
a large, strictly positive ambiguity premium at exact follower
optimality. Off $\mathcal L$, the exact $S(x)$ is a singleton and
$\Delta_0(x)=0$.

\subsection{$\eps$-neighborhood and the full framework}
For $\eps>0$, whether on or off $\mathcal L$, the set $S_\eps(x)$
is characterized explicitly. If $a_1+x_1<a_2+x_2$, writing
$\delta(x):=(a_2+x_2)-(a_1+x_1)>0$, one has
\begin{equation}\label{eq:case1-Seps}
  S_\eps(x)=\{y=(1-s,s):\ s\in[0,\min(1,\eps/\delta(x))]\}.
\end{equation}
This allows one to compute $\psi_\eps^o$, $\psi_\eps^p$, and
$\Delta_\eps$ in closed form along any ray in $X$. We use
$\eps=0.10$ in the main reported run.

\subsection{Scalarization sweep and frontier}
We solve~\eqref{eq:scalar-sweep} for $J=21$ weights
$\omega_j=j/20$, $j=0,\dots,20$, augmented with
$N_\mathrm{LHS}=80$ Latin-hypercube samples on $X$.
Table~\ref{tab:case1-diag} reports diagnostics at seven
representative points: five scalarization snapshots
($\omega\in\{0,0.25,0.5,0.75,1\}$), and two heuristic
benchmarks. Values are computed in closed form
from~\eqref{eq:case1-F-t}--\eqref{eq:case1-Seps} and verified by
the supplementary code.

\begin{table}[ht]
\centering
\caption{Case Study 1 diagnostics at representative leader
decisions, $\eps=0.10$. All values in model units. ``$\omega$''
denotes the scalarization weight. ``On $\mathcal L$'' indicates
whether $x$ satisfies $x_2=x_1-0.2$.}
\label{tab:case1-diag}
\small
\begin{tabular}{@{}lcccccc@{}}
\toprule
Label & $x_1$ & $x_2$ & On $\mathcal L$ &
$\psi_\eps^o(x)$ & $\Delta_\eps(x)$ & $\rho_\eps(x)$\\
\midrule
$\omega=0$ (nominal, on $\mathcal L$) & 1.60 & 1.40 & yes & 0.382 & 0.864 & 0.626\\
$\omega=0.25$                         & 1.60 & 1.20 & no  & 0.391 & 0.449 & 0.323\\
$\omega=0.5$                          & 1.60 & 1.00 & no  & 0.489 & 0.389 & 0.261\\
$\omega=0.75$                         & 1.60 & 0.50 & no  & 0.762 & 0.228 & 0.129\\
$\omega=1$ (robust corner)            & 2.00 & 0.00 & no  & 1.063 & 0.137 & 0.066\\
Heur. no toll                         & 0.00 & 0.00 & no  & 0.625 & 0.875 & 0.538\\
Heur. symmetric                       & 1.00 & 1.00 & no  & 0.425 & 0.875 & 0.614\\
\bottomrule
\end{tabular}
\end{table}

The key qualitative phenomena are visible in
Table~\ref{tab:case1-diag}. First, the nominal-optimal policy sits
on the indifference line $\mathcal L$ and has the smallest
$\psi_\eps^o$ of the set ($0.382$) but a large ambiguity premium
($0.864$) driven by genuine follower multiplicity. Second, moving
the leader's second toll $x_2$ away from the indifference value
$x_1-0.2$ trades nominal efficiency for robustness in a
continuous way: as $x_2$ decreases from $1.40$ to $0.00$ along
the $\omega$-sweep, $\psi_\eps^o$ rises from $0.382$ to $1.063$
while $\Delta_\eps$ falls from $0.864$ to $0.137$. Third, at both
heuristic benchmarks the cost gap is $\delta=0.2$, so with
$\eps=0.10$ the admissible parameter range is
$s\in[0,\eps/\delta]=[0,0.5]$, i.e.\ a \emph{half}-simplex of
flow splits. Directly computing $\psi_\eps^o$ and $\psi_\eps^p$
on this half-simplex yields $\Delta_\eps=0.875$ at both $(0,0)$
and $(1,1)$---a substantial ambiguity that is only partially
absorbed by the codimension-one geometry of $\mathcal L$. This
is precisely the phenomenon the diagnostic is designed to
surface.

Figure~\ref{fig:case1-frontier} displays the computed
robustness--efficiency frontier in the
$(\psi_\eps^o,\Delta_\eps)$ plane. The scalarization sweep
populates a well-resolved curve, with the nominal corner on
$\mathcal L$, the robust corner off $\mathcal L$, and a dense
collection of compromise points in between. The supported points
form a smooth decreasing trade-off curve, reflecting the
classical efficiency--robustness tension.

\begin{figure}[ht]
  \centering
  \includegraphics[width=0.65\textwidth]{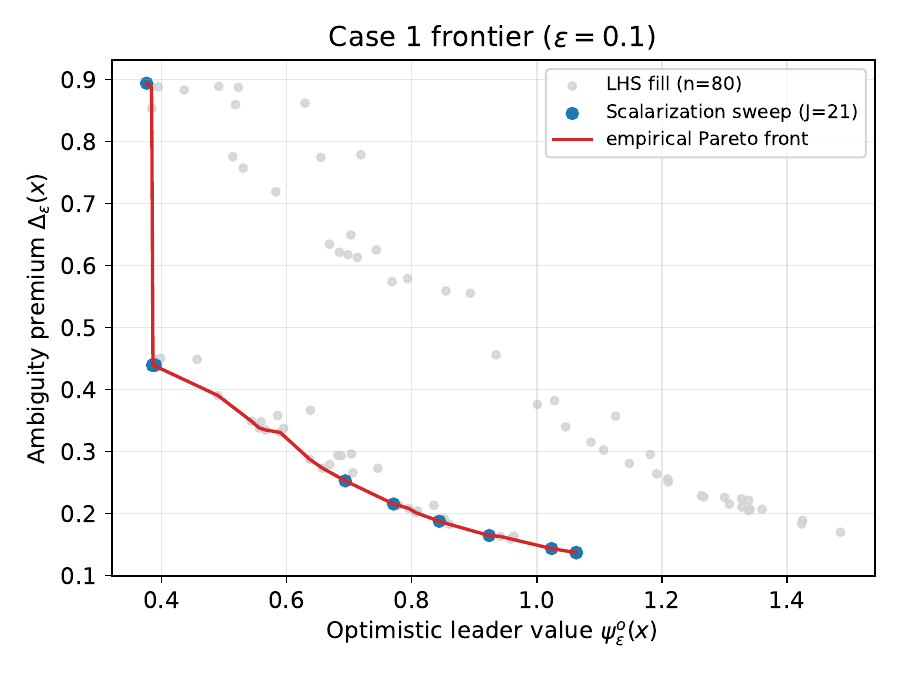}
  \caption{Robustness--efficiency frontier for Case Study 1
  ($\eps=0.10$). Blue dots: scalarization sweep over 21 weights.
  Gray dots: 80 Latin-hypercube samples. The nominal-optimal
  corner lies on the indifference line $\mathcal L$; the
  robustness-optimal corner lies strictly off $\mathcal L$.}
  \label{fig:case1-frontier}
\end{figure}

\subsection{Interpretation}
Case Study 1 demonstrates the framework in the mode it was
motivated to serve: a Stackelberg game with a follower whose
best response is genuinely set-valued along a
positive-dimensional, codimension-one indifference subset of
the leader's decision region. Here the ambiguity premium
$\Delta_\eps(x)$ is not a stability artifact but a real feature
of the follower's optimal correspondence, and the
robustness--efficiency frontier displays both regimes---a
high-reward brittle corner and a safer interior---with
intermediate compromise policies populating the curve.

\section{Case Study 2: Convex Generation-Capacity Planning with
Reliability-Type Diversification Constraints}
\label{sec:case2}
Building on the stylized convex generation-planning model, this second case study introduces reliability-motivated diversification constraints to rule out unrealistic single-technology portfolios.
Methodologically, we use the structured scalarization sweep of
\S\ref{sec:algo-frontier} rather than unstructured uniform random
sampling. 
Furthermore, rather than focusing on prescriptive managerial claims, 
this section is specifically designed to illustrate how the framework behaves
when multiplicity is absent and ambiguity arises only from the
$\eps$-relaxation, consistent with Corollary~\ref{cor:sqrt-rate}.

\subsection{Model}
We use a four-technology planning model with utility-scale solar
photovoltaics, onshore wind, natural-gas combined cycle, and a
stylized four-hour battery-like flexibility resource. The
technology data follow the open NREL ATB conventions and are
reported in Table~\ref{tab:techdata} for transparency;
quantitative values should not be interpreted as empirical
forecasts.

\begin{table}[ht]
\centering
\caption{Stylized technology data. Annualized capital assumes 7\%
discount rate, 25-year life.}
\label{tab:techdata}
\small
\begin{tabular}{@{}lrrrr@{}}
\toprule
Technology & CAPEX (\$/kW) & Ann. cap. (\$B/GW-yr) &
Var. cost (\$/MWh) & Cap. factor\\
\midrule
Solar PV     & 1{,}194 & 0.1025 &  5.0 & 0.246\\
Onshore Wind & 1{,}462 & 0.1255 &  7.0 & 0.345\\
Gas CCGT     &   958 & 0.0822 & 28.6 & 0.870\\
Battery 4h   & 1{,}247 & 0.1070 &  8.2 & 0.200\\
\bottomrule
\end{tabular}
\end{table}

\paragraph{Leader and follower.}
Leader variables $x=(x_1,\ldots,x_4)$ are installed capacities in
GW; follower variables $y=(y_1,\ldots,y_4)$ are average dispatches
in GW. Following the standing framework, the leader's effective
feasible set is a fixed compact subset of $\R^4$, and the
follower's feasible set $Y$ is a fixed compact box independent of
$x$:
\begin{equation}\label{eq:case2-XY}
\begin{aligned}
  X_{\mathrm{box}} &:=[0.2,8.0]\times[0.2,6.0]\times[0.5,10.0]\times[0.1,4.0],\\
  Y &:=[0,6.0]\times[0,5.0]\times[0,9.0]\times[0,3.5].
\end{aligned}
\end{equation}
The dispatch box $Y$ is sized generously so that the
unconstrained per-technology optimal dispatch $y_i=\alpha_i x_i$
is interior to $Y$ for every $x\in X_{\mathrm{box}}$ used in the
case study. The coupling between capacities and dispatches enters
\emph{soft}, through the lower-level objective's penalization of
$|y_i-\alpha_i x_i|$, rather than through a feasibility
restriction $y_i\le x_i$; this keeps $Y$ leader-independent and
preserves the standing assumptions of Section~\ref{sec:problem}.

Lower-level objective
\begin{equation}\label{eq:case2-f}
  f(x,y)=\sum_i w_i(y_i-\alpha_i x_i)^2
  +\mu_D\Bigl(\sum_i y_i-D\Bigr)^2,
\end{equation}
with $\alpha=(0.246,0.345,0.870,0.200)$,
$w=(3.0,2.5,1.0,2.0)$, $\mu_D=2.0$, and $D=5.0$ GW.

Leader objective
\begin{equation}\label{eq:case2-F}
  F(x,y)=\sum_i\kappa_i x_i+\sum_i\beta_i(x_i-y_i)^2
  +\sum_i\tilde c_i y_i+\lambda_D\Bigl(\sum_i y_i-D\Bigr)^2,
\end{equation}
with $\kappa$ from Table~\ref{tab:techdata}, $\beta=(0.15,0.12,0.05,0.08)$,
$\tilde c_i$ annualized dispatch-cost coefficients, and
$\lambda_D=1.5$.

\paragraph{Reliability-type diversification constraints.}
We add the constraints
\begin{equation}\label{eq:case2-rely}
  \sum_i x_i\ge D,
  \qquad
  x_i\le 0.6\sum_j x_j \quad \forall i,
  \qquad
  x_i\ge \underline x_i\ (\text{min build}),
\end{equation}
with $\underline x=(0.2,0.2,0.5,0.1)$ GW. The first constraint
enforces nameplate adequacy; the second caps any single
technology's share below $60\%$, a stylized diversification
criterion consistent with planning practice in many
jurisdictions~\cite{kahrobaee2010short,joskow2011comparing,billimoria2019market,
macdonald2016future,denholm2021challenges}; the third prevents zero
build of any technology. These constraints preclude the trivial
all-gas corner reported in the earlier version and make the
robustness--efficiency trade-off nontrivial. The effective leader
feasible set is therefore
\begin{equation}\label{eq:case2-Xfeas}
  X_{\mathrm{feas}}
  :=\Bigl\{x\in X_{\mathrm{box}}:\
  \sum_i x_i\ge D,\ \,
  x_i\le 0.6\sum_j x_j\ \forall i\Bigr\},
\end{equation}
which is a closed compact subset of $\R^4$. In the notation of
Sections~\ref{sec:problem}--\ref{sec:algo}, we take
$X=X_{\mathrm{feas}}$ throughout this case study.

\paragraph{Follower uniqueness.}
Because~\eqref{eq:case2-f} is strictly convex in $y$, $S(x)$ is a
singleton for every $x$. By Corollary~\ref{cor:sqrt-rate}, the
ambiguity premium satisfies
$\Delta_\eps(x)\le 2L_F(x)\sqrt{\eps/\mu(x)}$. A valid
quadratic-growth constant under the convention of
Assumption~\ref{ass:qg} (the inequality
$f(x,y)-\phi(x)\ge\mu(x)\,\dist(y,S(x))^2$, not Hessian
strong-convexity) is $\mu(x)=\min_i w_i=1.0$. The case study
therefore illustrates the framework in the regime where
multiplicity is absent and ambiguity arises from $\eps>0$,
exactly as the theory predicts.

\subsection{Results}
We use the workflow of Section~\ref{sec:algo} to compute a
candidate nominal optimum (which solves
$\min_{x\in X_\mathrm{feas}}\psi_\eps^o(x)$ by multistart Powell
with NI-penalized inner pessimistic evaluation), a candidate
robust optimum (which solves
$\min_{x\in X_\mathrm{feas}}\Delta_\eps(x)$ similarly), and a
family of convex-combination intermediates along the segment
between them. Latin-hypercube samples ($N_\mathrm{LHS}=80$)
populate the interior of the feasible region. The sweep-plus-fill
design produces the frontier in Figure~\ref{fig:case2-frontier}.
Table~\ref{tab:case2-policies} reports diagnostics at seven
representative policies. We explicitly note that $\psi_\eps^o(x)$
is a function of $x$, so entries in the $\psi^o$ column are
evaluations of the same leader functional at different
portfolios, not competing values of a single optimization.

\begin{table}[ht]
\centering
\caption{Case Study 2 diagnostics at representative policies,
$\eps=0.5$. Diversification constraints active (share cap 0.6,
reserve margin $\sum x_i\ge D=5$, minimum builds). Capacities
$x$ in GW; cost units \$B/yr. The ``status'' column labels the
computational provenance: \emph{incumbent} denotes the best
multistart solution for the indicated objective, and
\emph{convex comb.}\ denotes a policy on the line segment
between the two incumbents.}
\label{tab:case2-policies}
\small
\begin{tabular}{@{}lcccccccc@{}}
\toprule
Label & $x_1$ & $x_2$ & $x_3$ & $x_4$ &
$\psi^o$ & $\Delta$ & $\rho$ & Status\\
\midrule
Solver incumbent for nominal solve  & 1.09 & 0.35 & 2.58 & 0.98 & 0.599 & 1.129 & 0.706 & incumbent \\
Convex comb.\ $t{=}0.25$             & 1.08 & 0.54 & 3.18 & 1.03 & 0.681 & 0.900 & 0.535 & convex comb.\ \\
Convex comb.\ $t{=}0.50$             & 1.08 & 0.72 & 3.79 & 1.08 & 0.799 & 0.695 & 0.387 & convex comb.\ \\
Convex comb.\ $t{=}0.75$             & 1.07 & 0.91 & 4.39 & 1.12 & 0.949 & 0.518 & 0.266 & convex comb.\ \\
Solver incumbent for robust solve   & 1.06 & 1.09 & 4.99 & 1.17 & 1.132 & 0.370 & 0.174 & incumbent \\
Heur.\ gas-heavy (cap-binding)      & 0.79 & 0.79 & 3.12 & 0.50 & 0.596 & 0.922 & 0.578 & heuristic \\
Heur.\ renewables-heavy             & 3.00 & 2.50 & 1.00 & 0.50 & 1.451 & 1.460 & 0.596 & heuristic \\
\bottomrule
\end{tabular}
\end{table}

Table~\ref{tab:case2-policies} illustrates several phenomena
worth highlighting. First, among the scalarization-generated
sweep points, the solver incumbent for the nominal solve has the
second smallest $\psi^o$ and the second largest $\Delta$, whereas the solver
incumbent for the robust solve has the second largest $\psi^o$ and the
smallest $\Delta$; the convex-combination intermediates trace
out a monotone trade-off.

Second, Table~\ref{tab:case2-policies} also illustrates the
diagnostic value of reporting sampled heuristics alongside solver
incumbents. The gas-heavy heuristic slightly improves on the
reported solver incumbent for the nominal solve in both
displayed criteria, with
$(\psi^o,\Delta)=(0.596,0.922)$ compared with
$(0.599,1.129)$. 
Thus, within the sampled set, the solver
incumbent is not empirically nondominated. We keep both rows in
the table to make the computational status transparent: the
``solver incumbent'' is the best multistart output for the
nominal solve, whereas the heuristic is a hand-specified
benchmark that exposes a missed or weak local solution. This
reinforces the paper's diagnostic stance and motivates reporting
status labels rather than presenting the multistart outputs as
certified global optima.

Third, the $60\%$ share cap binds at the solver incumbent for
the robust solve ($x_3/\sum_j x_j=4.99/8.31\approx 0.600$) and
at the gas-heavy heuristic ($3.12/5.20=0.600$), but not at the
solver incumbent for the nominal solve (gas share
$2.58/5.00\approx 0.516$) nor at the renewables-heavy heuristic
(largest share $\approx 0.429$ in solar). No portfolio
degenerates to a single-technology corner, and the constraint is
tight where one would expect: in the gas-dominated regime
corresponding to high robustness in this stylized model.

The $O(\sqrt\eps)$ prediction of Corollary~\ref{cor:sqrt-rate} is
examined empirically in Figure~\ref{fig:case2-sqrt}, which plots
the ratio $\Delta_\eps(x)/\sqrt\eps$ at the balanced policy
$x=(1.08,0.72,3.79,1.08)$ for
$\eps\in\{0.05,0.1,0.25,0.5,1,2,4\}$. The computed ratios are
$0.810,\ 0.830,\ 0.892,\ 0.983,\ 1.129,\ 1.351,\ 1.688$. The
ratio is relatively stable for small $\eps$ and rises for larger
tolerances, consistent with a leading-order $\sqrt\eps$ effect
plus higher-order corrections that become non-negligible as
$\eps$ approaches unity in problem units.

\begin{figure}[ht]
  \centering
  \includegraphics[width=0.65\textwidth]{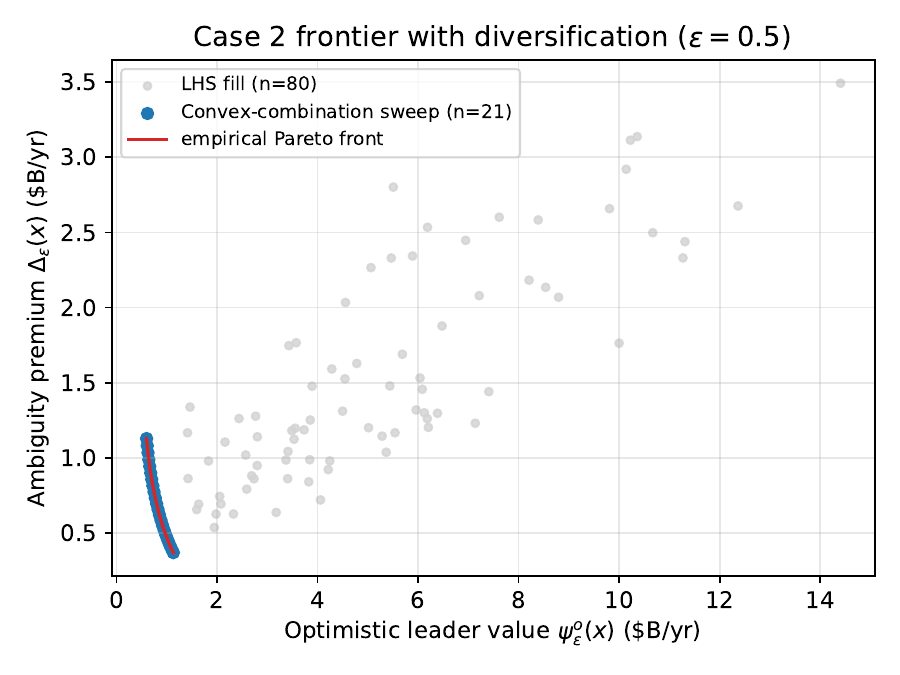}
  \caption{Robustness--efficiency frontier for Case Study 2
  ($\eps=0.5$), with diversification constraints. Blue dots:
  scalarization sweep. Gray dots: Latin-hypercube fill.}
  \label{fig:case2-frontier}
\end{figure}

\begin{figure}[ht]
  \centering
  \includegraphics[width=0.65\textwidth]{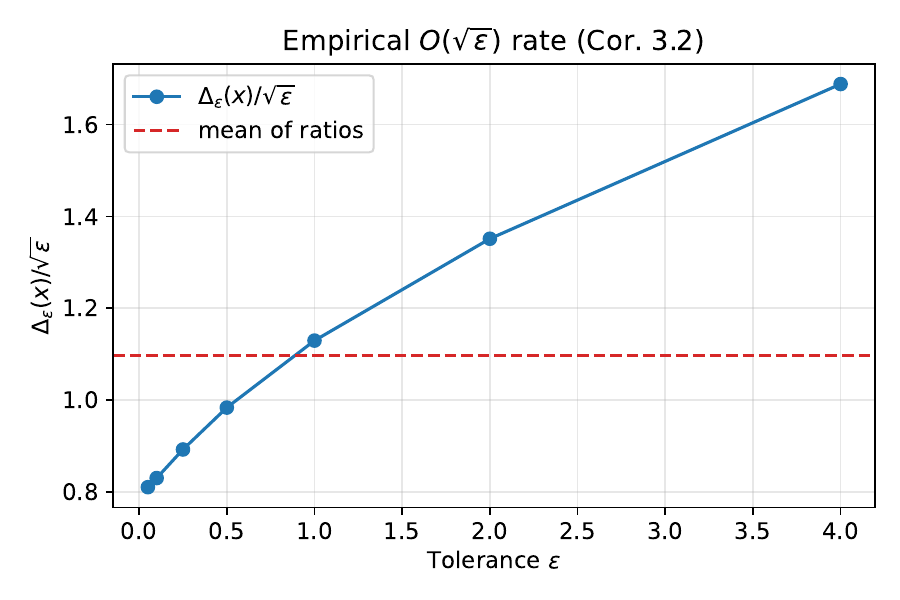}
  \caption{Empirical check of the $O(\sqrt\eps)$ rate
  (Corollary~\ref{cor:sqrt-rate}): ratio $\Delta_\eps/\sqrt\eps$
  for the balanced policy across a grid of $\eps$. The ratio is
  relatively stable for small $\eps$ and rises for larger
  tolerances, consistent with a leading-order $\sqrt\eps$ effect
  plus higher-order corrections.}
  \label{fig:case2-sqrt}
\end{figure}

\subsection{Interpretation}
The generation-planning example illustrates a different regime
from Case Study 1. The lower-level problem is strictly convex, so
exact follower multiplicity is absent. Any ambiguity premium
therefore comes from the $\eps$-optimal response set rather than
from multiple exact optima. The purpose of the example is to
show how the diagnostic behaves when implementation slack, rather
than exact multiplicity, is the source of exposure.

With diversification constraints in place, no portfolio
degenerates to a single-technology corner. The multistart
workflow identifies a solver incumbent for the nominal solve
with a gas-heavy but not share-cap-binding mix---roughly $2.58$
GW CCGT out of a $5.00$ GW total, with gas accounting for about
$52\%$ of total capacity---and a solver incumbent for the robust
solve that increases total capacity substantially (to about
$8.31$ GW) and pushes the gas share to the diversification cap
($\approx 60\%$), while also increasing wind and battery
capacity in absolute terms relative to the nominal solver
incumbent. We do not claim that either is the right investment
recommendation for any real system. What we claim is only that:
\begin{enumerate}[leftmargin=*,nosep]
\item even without follower multiplicity, the ambiguity premium
  is strictly positive because the $\eps$-response set has
  positive diameter (Corollary~\ref{cor:sqrt-rate});
\item the shape of the frontier is informative for screening,
  identifying policies that are nominally attractive but
  proportionally more exposed to near-optimal redispatch;
\item the framework is sensitive to the choice of
  diversification or reliability constraints. Without such
  constraints, the earlier all-gas pessimistic solution we
  documented in a preliminary version of this paper was an
  artifact of the model's omission of reliability and
  contingency dimensions, not a substantive recommendation.
\end{enumerate}
This illustrates a general point: the ambiguity premium is one
dimension of robustness among several (contingency, outage,
uncertainty). A well-posed planning problem must combine it with
the dimensions captured by standard reliability and
uncertainty-aware planning
models~\cite{conejo2010decision,mulvey1995robust,delage2010distributionally}.

\section{Managerial Insights}
\label{sec:managerial}
The diagnostic framework supports decision-makers in three
concrete ways. First, by reporting each candidate decision as a
triple $(\psi_\eps^o(x),\Delta_\eps(x),g_\mathrm{stat}^\eps)$, it
separates nominal efficiency, implementation robustness, and
first-order computational quality that are often conflated in
single-solution bilevel reports. Second, by visualizing the
robustness--efficiency frontier generated via a structured
scalarization sweep, it exposes whether a candidate decision
sits on a brittle corner or a robust plateau. Third, by
stress-testing $\Delta_\eps(x)$ across a grid of $\eps$, it
quantifies the sensitivity of a decision to implementation
slack, forecast error, or near-optimal follower re-response.

As Case Study 2 illustrates, in
models that omit reliability, outage, or network constraints,
pessimistic optima can collapse to single-technology corners
that minimize redispatch ambiguity only because the redispatch
space itself has collapsed. The appropriate use of the framework
is as a screening overlay on top of a planning model that is
already well-posed with respect to the other dimensions of
robustness a decision-maker cares about.

Within that scope, the framework applies naturally to network
pricing and transportation~\cite{labbe1998bilevel,brotcorne2001bilevel,
gilbert2015numerical}, electricity market
design~\cite{daxhelet2007eu,hobbs2007nash,gabriel2012complementarity,wogrin2020applications}, Stackelberg
investment and entry games~\cite{moiseeva2014exercise}, and regulatory
decision-making in adversarial or strategic
environments~\cite{smith2020survey,caprara2016bilevel,lozano2017value}, as long as the analyst is willing to
treat it as one component of a larger decision-analytic
process.

\section{Discussion and Conclusion}
\label{sec:conclusion}
We have proposed a diagnostic framework for hierarchical
decision problems that organizes known optimistic and pessimistic
bilevel--GNEP reformulations around the ambiguity premium
$\Delta_\eps(x)$. The novelty is not the gap itself, which is
classical, but the decision-analytic packaging around it: the
Lipschitz-type bound relating $\Delta_\eps(x)$ to the diameter of
the $\eps$-response set, the $O(\sqrt\eps)$ rate under quadratic
lower-level growth, the structured scalarization sweep for
frontier construction, and the combined use of the convexified
optimistic reformulation and Nikaido--Isoda pessimistic
evaluation as a screening workflow.

Several limitations should be noted. The stationarity residual
\eqref{eq:stat-residual} uses Fischer--Burmeister encoding, which
is standard but in the presence of degenerate multipliers can
fail to separate stationarity from numerical noise; more refined
encodings are
available~\cite{kanzow1996nonlinear,kanzow1998new,ralph1995directional}. The
pessimistic outer solve is derivative-free multistart and does
not return global certificates. The case studies are small and
are chosen to expose the framework's mechanics; scaling the
Nikaido--Isoda penalization to high-dimensional problems is an
area of active research~\cite{dreves2016solving}.

Natural extensions include stochastic and robust lower-level
models~\cite{beck2023survey,buchheim2022stochastic,
burtscheidt2020risk}, network-constrained dispatch with
unit-commitment multiplicity~\cite{o2005efficient,bertsimas2012adaptive,pozo2011finding}, and coupling with
risk-averse leader objectives~\cite{rockafellar2000optimization,
artzner1999coherent}. In all these directions the ambiguity premium can
play the role of a compact diagnostic that exposes
implementation risk that is otherwise easy to overlook.

\appendix
\section{Proofs}\label{sec:proofs}

\begin{proof}[Proof of Proposition~\ref{prop:continuity}]
\textbf{Upper semicontinuity of $S_\eps$.}
Let $(x_k,\eps_k)\to(x,\eps)$ and let $y_k\in S_{\eps_k}(x_k)$
with $y_k\to y$. Since $Y$ is compact, $y\in Y$. By continuity of
$f$ and $\phi$,
\[
f(x,y)=\lim_{k\to\infty}f(x_k,y_k)
\le \lim_{k\to\infty}\bigl(\phi(x_k)+\eps_k\bigr)
= \phi(x)+\eps.
\]
Hence $y\in S_\eps(x)$. Therefore the graph of $S_\eps$ is
closed. Since $S_\eps(x)\subseteq Y$ and $Y$ is compact, each
value $S_\eps(x)$ is compact and nonempty. Thus
$(x,\eps)\mapsto S_\eps(x)$ is upper semicontinuous.

\medskip
\textbf{Semicontinuity of the marginal functions.}
Because $S_\eps$ is compact-valued and upper semicontinuous, Berge's
maximum theorem gives that
\[
(x,\eps)\mapsto \psi_\eps^p(x)=\max_{y\in S_\eps(x)}F(x,y)
\]
is upper semicontinuous and
\[
(x,\eps)\mapsto \psi_\eps^o(x)=\min_{y\in S_\eps(x)}F(x,y)
\]
is lower semicontinuous. Since $-\psi_\eps^o$ is upper
semicontinuous, their difference
\[
\Delta_\eps(x)=\psi_\eps^p(x)-\psi_\eps^o(x)
=\psi_\eps^p(x)+\bigl(-\psi_\eps^o(x)\bigr)
\]
is upper semicontinuous as the sum of two upper semicontinuous
functions.

\medskip
\textbf{Lower semicontinuity of $S_\eps$ at $(x_0,0)$.}
Fix $y_0\in S(x_0)$ and let $(x_k,\eps_k)\to(x_0,0)$.
Because $y_0\in S(x_0)$, $f(x_0,y_0)=\phi(x_0)$.
By continuity of $f$ and $\phi$, $f(x_k,y_0)-\phi(x_k)\to 0$.
For all sufficiently large $k$, we have $x_k\in U$, so the quadratic
growth condition yields
\[
f(x_k,y_0)-\phi(x_k)
\ge \underline\mu\,\dist\bigl(y_0,S(x_k)\bigr)^2.
\]
Therefore $\dist\bigl(y_0,S(x_k)\bigr)\to 0$.
Since $S(x_k)$ is nonempty and compact, there exists
$\tilde y_k\in S(x_k)$ such that
\[
\|\tilde y_k-y_0\|=\dist\bigl(y_0,S(x_k)\bigr).
\]
Hence $\tilde y_k\to y_0$. Moreover, $S(x_k)\subseteq S_{\eps_k}(x_k)$
because $\eps_k\ge 0$. Thus $\tilde y_k\in S_{\eps_k}(x_k)$ and
$\tilde y_k\to y_0$, which proves lower semicontinuity of
$S_\eps$ at $(x_0,0)$.

\medskip
\textbf{Lower semicontinuity of $S_\eps$ at $(x_0,\eps_0)$ for
$\eps_0>0$.}
Fix $y_0\in S_{\eps_0}(x_0)$ and let $(x_k,\eps_k)\to(x_0,\eps_0)$.

If
\[
f(x_0,y_0)<\phi(x_0)+\eps_0,
\]
then by continuity of $f$ and $\phi$ we have, for all sufficiently
large $k$, $f(x_k,y_0)\le \phi(x_k)+\eps_k$, so we may simply take $y_k:=y_0$.

Now suppose
\[
f(x_0,y_0)=\phi(x_0)+\eps_0.
\]
Let $\hat y\in S(x_0)$, which exists by Assumption~\ref{ass:standing}.
For any $\alpha\in(0,1)$, define
\[
y_\alpha:=(1-\alpha)y_0+\alpha \hat y.
\]
Since $Y$ is convex, $y_\alpha\in Y$. By convexity of
$f(x_0,\cdot)$ on $Y$,
\[
f(x_0,y_\alpha)
\le (1-\alpha)f(x_0,y_0)+\alpha f(x_0,\hat y)
= (1-\alpha)\bigl(\phi(x_0)+\eps_0\bigr)+\alpha\phi(x_0)
= \phi(x_0)+(1-\alpha)\eps_0
< \phi(x_0)+\eps_0.
\]
Fix a sequence $\alpha_m\downarrow 0$. For each $m$, continuity
again implies that there exists $K_m$ such that for all
$k\ge K_m$,
\[
f(x_k,y_{\alpha_m})\le \phi(x_k)+\eps_k,
\]
so $y_{\alpha_m}\in S_{\eps_k}(x_k)$. By choosing $K_m$
strictly increasing and defining $y_k:=y_{\alpha_m}$ for
$K_m\le k<K_{m+1}$, we obtain a sequence with
$y_k\in S_{\eps_k}(x_k)$ and $y_k\to y_0$. This proves lower
semicontinuity at $(x_0,\eps_0)$.

\medskip
Finally, when $S_\eps$ is both upper and lower semicontinuous at
$(x_0,\eps_0)$, Berge's maximum theorem implies that
$\psi_\eps^o$ and $\psi_\eps^p$ are continuous at
$(x_0,\eps_0)$, and therefore so is $\Delta_\eps=\psi_\eps^p-\psi_\eps^o$.
\end{proof}

\subsection*{Exactness of the Nikaido--Isoda penalization}
This is standard~\cite{fukushima1992equivalent,von2009optimization,harker1991generalized}
and is reproduced for self-containment. Fix $x\in X$. For any
$(y,v)\in Y\times Y$, $\mathcal N_x(y,v)\ge 0$, with equality iff
$(y,v)$ is a lower equilibrium of the pessimistic
GNEP~\eqref{eq:MF}. By compactness of $Y$ and continuity of the
integrands, the suprema defining $\mathcal N_x$ are attained.
Continuity of $\mathcal N_x$ follows under the standard
continuity assumptions for the associated feasible
correspondence; these are satisfied in the compact examples
considered below and are standard in NI-gap penalty
analyses~\cite{fukushima1992equivalent,von2009optimization}.
By compactness, for each $\sigma>0$, \eqref{eq:ni-penalty} admits
a minimizer $(y^\sigma,v^\sigma)$. Along any sequence
$\sigma_t\uparrow\infty$, $\{(y^{\sigma_t},v^{\sigma_t})\}$ is
bounded; pass to a convergent subsequence with limit
$(y^\star,v^\star)$.

\emph{Vanishing NI gap.}
Let $(\bar y,\bar v)$ be a lower equilibrium attaining
$\psi_\eps^p(x)$, which exists by compactness. Since
$\mathcal N_x(\bar y,\bar v)=0$, optimality of
$(y^{\sigma_t},v^{\sigma_t})$ for the penalized problem at
$\sigma_t$ gives
\begin{equation}\label{eq:ni-comp}
  -F(x,y^{\sigma_t})+\sigma_t\,\mathcal N_x(y^{\sigma_t},v^{\sigma_t})
  \le -F(x,\bar y).
\end{equation}
Rearranging~\eqref{eq:ni-comp},
\begin{equation}\label{eq:ni-bound}
  \sigma_t\,\mathcal N_x(y^{\sigma_t},v^{\sigma_t})
  \le F(x,y^{\sigma_t})-F(x,\bar y)
  \le \max_{y\in Y}F(x,y)-F(x,\bar y),
\end{equation}
so the left-hand side is bounded above by a finite constant
independent of $t$ (continuity of $F$ on the compact set $Y$
ensures the maximum is finite). With $\sigma_t\uparrow\infty$, the
inequality forces $\mathcal N_x(y^{\sigma_t},v^{\sigma_t})\to 0$
along the subsequence; by continuity of $\mathcal N_x$,
$\mathcal N_x(y^\star,v^\star)=0$, so $(y^\star,v^\star)$ is a
lower equilibrium of \eqref{eq:MF}.

\emph{Pessimistic optimality.}
Passing to the limit in \eqref{eq:ni-comp} and using
$\sigma_t\mathcal N_x(y^{\sigma_t},v^{\sigma_t})\ge 0$ together
with continuity of $F$ yields
\[
  -F(x,y^\star)\le -F(x,\bar y),
\]
i.e.\ $F(x,y^\star)\ge F(x,\bar y)=\psi_\eps^p(x)$. Conversely,
since $\mathcal N_x(y^\star,v^\star)=0$, the point $y^\star$ is an
admissible pessimistic lower-level response (i.e.\ feasible for
the inner pessimistic maximization defining $\psi_\eps^p(x)$), so
$F(x,y^\star)\le\psi_\eps^p(x)$. Combining the two inequalities,
$F(x,y^\star)=\psi_\eps^p(x)$.

\section{Reproducibility}\label{sec:repro}
All computations were performed in Python using NumPy, SciPy, and
Matplotlib. The supplementary code \texttt{reproduce.py} generates
all tables and figures reported in the paper. The fixed seed
\texttt{seed=7} is used for all Latin-hypercube samples.

\paragraph{Lower-level solves} use SciPy's \texttt{SLSQP} with
\texttt{ftol}$=10^{-12}$, \texttt{maxiter}$=500$.

\paragraph{Optimistic proximal scheme} uses $\tau=2.0$, max 120
iterations, tolerance $10^{-6}$ on the joint test.

\paragraph{Pessimistic NI-gap penalization} uses
$\sigma\in\{10,10^2,10^3,10^4\}$ with \texttt{SLSQP} inner solves
and multistart with $n_\mathrm{starts}=5$ in Case Study 1 and
$n_\mathrm{starts}=8$ in Case Study 2.

\paragraph{Frontier sweep} uses $J=21$ weights and
$N_\mathrm{LHS}\in\{80,120\}$ samples.

\paragraph{Status labels} follow \S\ref{sec:algo-frontier}:
\emph{converged} (stopping test satisfied), \emph{incumbent}
(iteration limit reached), \emph{heuristic} (hand-specified),
\emph{empirical Pareto} (undominated in sampled set).

\bibliographystyle{unsrt}
\bibliography{reference}

\end{document}